\numberwithin{equation}{section}
\newtheorem{thm}{Theorem}[section]
\newtheorem{example}{Example}
\newtheorem{prop}[thm]{Proposition}
\newtheorem{lem}[thm]{Lemma}
\newtheorem{dfn}[thm]{Definition}
\newtheorem{remark}[thm]{Remark}
\newtheorem{cor}[thm]{Corollary}
\numberwithin{equation}{section}
\newcommand{\F}{\mathbb{F}}
\newcommand{\N}{\mathbb{N}}
\newcommand{\Q}{\mathbb{Q}}
\newcommand{\Z}{\mathbb{Z}}
\newcommand{\mcO}{\mathcal{O}}
\newcommand{\mfb}{\mathfrak{b}}
\newcommand{\mfm}{\mathfrak{m}}
\newcommand{\mfn}{\mathfrak{n}}
\newcommand{\Dif}{\mathfrak{D}}
\newcommand{\mfp}{\mathfrak{p}}
\newcommand{\mfq}{\mathfrak{q}}
\newcommand{\SL}{\mathrm{SL}}
\newcommand{\GL}{\mathrm{GL}}
\newcommand{\Gal}{\mathrm{Gal}}
\newcommand{\Frob}{\mathrm{Frob}}
\def\1{1\!\!1}
\newcommand{\mrm}[1]{\mathrm{#1}}
\title[On generation of the coefficient field by a Fourier coefficient]{On generation of the coefficient field of a primitive Hilbert modular form by a single Fourier coefficient}
\author[N. Kumar]{Narasimha Kumar}
\address[N. Kumar]{Department of Mathematics, Indian Institute of Technology Hyderabad, Kandi, Sangareddy 502285, INDIA.}
\email{narasimha@math.iith.ac.in}
\author[S. Sahoo]{Satyabrat Sahoo}
\address[S. Sahoo]{Department of Mathematics, Indian Institute of Technology Hyderabad, Kandi, Sangareddy 502285, INDIA.}
\email{ma18resch11004@iith.ac.in}
\keywords{Hilbert modular forms, Fourier coefficients, Finite generation, Density, Inner twists}
\subjclass[2010]{Primary 11F30,11F41; Secondary  11F80}
\date{\today}
\begin{document}
	\maketitle
	
	\begin{abstract}
	    For a primitive Hilbert modular form $f$  over $F$ of weight $k$, under certain assumptions on image of $\bar{\rho}_{f,\lambda}$, we calculate the Dirichlet density of primes $\mfp$ for which the $\mfp$-th Fourier coefficient $C(\mfp, f)$ generates the coefficient field $E_f$. 
	    If $k=2$, then we show that the assumption on the image of $\bar{\rho}_{f,\lambda}$ is satisfied when the degrees of $E_f, F$ are equal and odd prime. We also compute the density of primes $\mfp$ for which $C^*(\mfp, f)$ generates $F_f$. Then, we provide some examples of $f$ to support our results.
        Finally, we calculate the density of primes $\mfp$ for which $C(\mfp, f) \in K$ for any field $K$ with $F_f \subseteq K \subseteq E_f$. This density is completely determined by the inner twists of $f$ associated with $K$. This work can be thought of as a generalization of~\cite{KSW08} to primitive Hilbert modular forms.
	\end{abstract}

	\section{Introduction}
	The study of the Fourier coefficients of modular forms is an active area of research in number theory. 
	It is well-known that for any primitive form $f$ over $\Q$, the Fourier coefficients of $f$ generate a number field $E_f$. In~\cite{KSW08}, the authors proved that the set of primes $p$ for which $p$-th Fourier coefficient of $f$ generates $E_f$ has density one if $f$ does not have any non-trivial inner twists. To the best of the author's knowledge, the analogous question is still open for Hilbert modular forms, which is the objective of our study in this article.

	For a primitive form $f$  over $F$, let $E_f$ denote the number field generated by the Fourier coefficients $C(\mfp, f)(\mfp \in P)$ of $f$, where $P$ denote the set of all prime ideals of $F$ (cf.~\cite{S78}). 	We first state a result that, for a primitive form $f$ over $F$ of weight $2$, the set of $\mfp \in P$ for which $\Q(C(\mfp,f))=E_f$ has Dirichlet density $1$, if $[F : \Q]=[E_f : \Q]$ is an odd prime (cf.~Theorem~\ref{MT1}). We then state and prove a general result for $f$ of weight $k$ (cf.~Theorem~\ref{MT2}) under some assumptions on the image of $\bar{\rho}_{f, \lambda}$(cf.~\eqref{assumption in Theorem for E_f})). We then show that these assumptions on the image of $\bar{\rho}_{f, \lambda}$ are satisfied for primitive forms $f$ over $F$ of weight $2$, if $[F : \Q]=[E_f : \Q]$ is an odd prime. The proof of Theorem~\ref{MT1} mainly depends on an important proposition of Dimitrov (cf.~\cite[Proposition 3.9]{D05}).
    We continue this study for the field $F_f \subseteq E_f$ and show that the set of $\mfp \in P$ for which $\Q(C^\ast(\mfp,f))=F_f$ has density $1$ (cf.~\S\ref{Notations section} for the definitions of $C^\ast(\mfp,f)$ and $F_f$).

	This article builds on the ideas of Koo et al., in~\cite{KSW08}, for primitive forms over $\Q$. One of the vital ingredients in the proof of ~\cite[Theorem 1.1]{KSW08} is a Theorem of Ribet (cf.~\cite[Theorem 3.1]{R85}), where he explicitly described the image of $\l$-adic residual Galois representation $\bar{\rho}_{f,l}$ attached to classical modular forms. This result plays a crucial role in obtaining certain sharp bounds for the images of $\bar{\rho}_{f,l}$, which are helpful in their proof. 
	Unfortunately, in our context, an analog of Ribet's result does not seem to exist in the literature. In order to get similar sharp bounds for the images of $\bar{\rho}_{f,\lambda}$, we have to work with some 
	assumptions (cf.~\eqref{assumption in Theorem for E_f} in the text). This explains the reason for our assumptions in Theorem~\ref{MT1} and Theorem~\ref{MT2}. 
	
	Our results can be thought of as a generalization of the results in~\cite{KSW08} to primitive Hilbert modular forms $f$ over $F$. Using LMFDB, we produce examples of primitive Hilbert modular forms $f$ of parallel weight $2$ in support of Theorem~\ref{MT1} (cf. Example~\ref{example1}, Example~\ref{example2} and Example~\ref{example3} in the text). Finally, we calculate the density of $\mfp \in P$ for which $C(\mfp, f) \in K$, where $K \subseteq E_f$ is a subfield. This density depends on whether $K\supseteq F_f$ or not. If $F_f \not \subseteq K$, then it is zero (cf. Lemma~\ref{K does not contains F_f}), otherwise it is non-zero and completely determined by the inner twists of $f$ associated with $K$ (cf. Proposition~\ref{K contains F_f}).


 	\subsection{Structure of the article:}
	The article is organized as follows. In $\S2$, we collate all the preliminaries which are required to prove our main theorems (cf.~Theorem~\ref{MT1}, Theorem~\ref{MT2}). We also introduce the notion of inner twists and study their properties quite elaborately. In $\S3$, we state and prove Theorem~\ref{MT1} and its generalization i.e., Theorem~\ref {MT2} for primitive $f$ over $F$ of parallel weight $2$ and 
	weight $k$, respectively, under certain assumptions. We also prove a variant of these results for $F_f$ and study their consequences. In $\S4$, we calculate the Dirichlet  density of $\mfp \in P$ for which $C(\mfp, f) \in K$ for any field $K$ with $K \subseteq E_f$. This density is completely determined by the inner twists of $f$ associated with $K$ if $F_f \subseteq K$.

	\section{Preliminaries}
	\label{Preliminaries}
	Let $F$ be a totally real number field. Let $\mcO_F$, $\mfn$, and $\Dif$ represent the ring of integers, an ideal, and the absolute different of $F$, respectively. 
 
	\subsubsection{Notations}
	\label{Notations section}
	Throughout this article, we fix to use the following notations.
	\begin{itemize}
		\item Let $\mathbb{P}$,  $P$ denote the set of all primes in $\Z$, $\mcO_F$, respectively.
		\item Let $k=(k_1,k_2,...,k_n)$ be an $n$-tuple of integers such that $k_i\geq2$ and $k_1\equiv k_2 \equiv \cdots\equiv k_n \pmod 2$. Let $k_0 := \mrm{max}\{k_1,k_2,...,k_n \}$, $n_0=k_0-2.$ 
		\item For any number field $K$, denote $G_K:= \Gal(\bar{K}/K).$ Let $L$ be a subfield of $K$. 
		For a prime ideal $\mfq$ in $K$ lying above $\mfp=\mfq \cap L$ in $L$, 
        let $\mathrm{e}(\mfq/ \mfp)$ and $\mathrm{f}(\mfq/ \mfp)$ denote the ramification degree and inertia degree of $\mfq$ over $\mfp$, respectively.
		
	\end{itemize}
	
	For any Hecke character $\Psi$ of $F$ with conductor dividing $\mfn$ and infinity-type $2-k_0$, let $S_k(\mfn, \Psi)$ denote the space of all Hilbert modular newforms over $F$ of weight $k$, level $\mfn$ and character $\Psi$. A primitive form is a normalized Hecke eigenform in the space of newforms. The ideal character corresponding to $\Psi$ of $F$ is denoted by $\Psi^\ast.$
	
	For a primitive form $f\in S_k(\mfn, \Psi)$, let 
	$C(\mfb,f)$ denote the Fourier coefficient of $f$ corresponding to an integral ideal  $\mfb$ of $F$ and $C^\ast(\mfb , f):=\frac{{C(\mfb, f)}^2}{\Psi^\ast(\mfb)}$  for all ideal $\mfb$ with $(\mfb, \mfn)=1$. Write  $E_f=\Q(C(\mfb, f)),\ F_f=\Q(C^\ast(\mfb , f))$,  where $\mfb$ runs over all the integral ideals of $\mcO_F$ such that $(\mfb, \mfn)=1$. Let $\mathcal{P}_f$ denote the set of all prime ideals in $E_f$.
	For any two subfields $F_1, F_2$ such that $\Q \subseteq F_2 \subseteq F_1 \subseteq E_f$, we let 
	$$ \mathrm{f}_{\lambda, F_1, F_2} := \mathrm{f}(\lambda \cap F_1 / \lambda \cap F_2)$$
	for $\lambda \in \mathcal{P}_f$.
	The following proposition describes some properties of $E_f$.
	\begin{prop}[\cite{S78}]
		\label{GS}
		Let $f\in S_k(\mfn, \Psi)$ be a primitive form of weight $k$, level $\mfn$ and character $\Psi$ with coefficient field $E_f$. Then 
		\begin{enumerate}
			\item $E_f$ is a finite Galois extension of $\Q$,
			\item $\Psi^\ast(\mfm) \in E_f$, for all ideals $\mfm \subseteq \mcO_F,$
			\item $E_f$ is either a totally real or a CM field,
			\item $E_f = \Q (\{C(\mfp,f)\}_{\mfp \in S})$, where $S \subseteq P$ with $S^c$ is finite,
			\item $\overline{C(\mfp,f)}={\Psi^\ast(\mfp)}^{-1}C(\mfp,f)$,  for all $\mfp \in P$ with $(\mfp,\mfn)=1$.
		\end{enumerate}
	\end{prop}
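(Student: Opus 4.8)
The plan is to reduce the proposition to one substantial input — Shimura's arithmeticity of the Fourier expansions of Hilbert modular forms — and to derive everything else by elementary Hecke manipulations and a short Galois argument. I would first fix normalisations: $\Psi^{\ast}$ is the ideal character attached to $\Psi$, a finite-order character, so $\Psi^{\ast}(\mfm)$ is a root of unity or $0$; $C(\mcO_{F},f)=1$; for $\mfp\nmid\mfn$ the coefficient $C(\mfp,f)$ is the eigenvalue of the Hecke operator $T_{\mfp}$ on $f$, and one has the recursions $C(\mfp^{2},f)=C(\mfp,f)^{2}-\Psi^{\ast}(\mfp)\,N\mfp^{\,k_{0}-1}$ ($N\mfp$ the absolute norm) and $C(\mfp^{m+1},f)=C(\mfp,f)\,C(\mfp^{m},f)$ for $\mfp\mid\mfn$. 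Granting that the $q$-expansion map gives $S_{k}(\mfn,\Psi)$ a $\Q$-rational structure stable under all $T_{\mfp}$ and all diamond operators, these commuting operators generate a finite-dimensional commutative $\Q$-algebra, and the eigensystem of $f$ has image a number field — namely $E_{f}$ — so $E_{f}/\Q$ is finite; the \emph{normality} of $E_{f}/\Q$ I would then quote from Shimura's description of the $\mathrm{Aut}(\C)$-action on primitive forms (which, given arithmeticity, carries $f$ to primitive forms $f^{\sigma}$ of level $\mfn$ and character $\Psi^{\sigma}$). Assertion (4) is then immediate: by the two recursions every $C(\mfb,f)$ is a polynomial over $\Q$ in the $C(\mfp,f)$ and the $\Psi^{\ast}(\mfp)$, and by (2) the latter lie in $\Q(\{C(\mfp,f)\}_{\mfp\in S})$ whenever $S^{c}$ is finite.

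Next I would establish (5), then deduce (2) and (3) from it. For (5): the Petersson adjoint of $T_{\mfp}$ on $S_{k}(\mfn,\Psi)$ for $\mfp\nmid\mfn$ is $\overline{\Psi^{\ast}(\mfp)}\,T_{\mfp}$ (cf.~\cite{S78}); evaluating $\langle T_{\mfp}f,f\rangle=\langle f,T_{\mfp}^{\ast}f\rangle$ on the eigenform $f$ gives $C(\mfp,f)=\Psi^{\ast}(\mfp)\,\overline{C(\mfp,f)}$, i.e.\ $\overline{C(\mfp,f)}=\Psi^{\ast}(\mfp)^{-1}C(\mfp,f)$, which is (5). For (2): the recursion for $C(\mfp^{2},f)$ yields $\Psi^{\ast}(\mfp)=\bigl(C(\mfp,f)^{2}-C(\mfp^{2},f)\bigr)N\mfp^{\,1-k_{0}}\in E_{f}$ for all $\mfp\nmid\mfn$, and then multiplicativity of $\Psi^{\ast}$ on ideals prime to its conductor (it vanishes on the rest), together with finiteness of the relevant ray-class group, propagates this to $\Psi^{\ast}(\mfm)\in E_{f}$ for every $\mfm$. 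For (3): by (5) and (2), complex conjugation maps each generator $C(\mfp,f)$ of $E_{f}$ back into $E_{f}$, so it restricts to an automorphism $c$ of $E_{f}$ with $c^{2}=\mathrm{id}$; applying (5) to each conjugate $f^{\sigma}$, $\sigma\in\Gal(E_{f}/\Q)$ — still of coefficient field $E_{f}$ by (1), with character $\Psi^{\ast\sigma}$ — gives $\overline{\sigma C(\mfp,f)}=\sigma\,\overline{C(\mfp,f)}$, so $c$ is central in $\Gal(E_{f}/\Q)$. If $c=\mathrm{id}$ then $E_{f}\subseteq\R$ and, $E_{f}/\Q$ being normal, $E_{f}$ is totally real; if $c$ has order $2$ then, $c$ being central, complex conjugation acts in the same way on every embedding of $E_{f}$, forcing $E_{f}$ totally imaginary with maximal totally real subfield $E_{f}^{\langle c\rangle}$ of index $2$, i.e.\ $E_{f}$ a CM field.

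The main obstacle is exactly the arithmeticity input underpinning (1): that the $\overline{\Q}$-structure on $q$-expansions of $S_{k}(\mfn,\Psi)$ descends to $\Q$, is Hecke-stable, and is permuted compatibly by $\mathrm{Aut}(\C)$ so that $f^{\sigma}$ is again primitive of level $\mfn$ with character $\Psi^{\sigma}$ — together with the normality of $E_{f}/\Q$ extracted from it. This is the heart of~\cite{S78} (established there analytically, via CM periods and Rankin--Selberg integrals), so I would not reprove it but cite it; once it is in hand, the adjointness identity behind (5), the Hecke-recursion bookkeeping behind (2) and (4), and the short Galois argument behind (3) are all routine.
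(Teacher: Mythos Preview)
The paper does not prove this proposition at all: it is stated with the attribution~\cite{S78} and used as a black box thereafter. So there is no ``paper's own proof'' to compare against; your proposal is already doing more than the paper does. Your sketch is essentially the standard route one finds in Shimura's work and its expositions --- arithmeticity of Fourier expansions plus the $\mathrm{Aut}(\C)$-action for (1), the Petersson adjoint identity for (5), the Hecke recursion for (2), and the centrality of complex conjugation for (3) --- and is correct in outline.

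One small gap: your argument for (4) shows that every $C(\mfb,f)$ lies in the field generated by \emph{all} the $C(\mfp,f)$ and the $\Psi^{\ast}(\mfp)$, but it does not explain why the finitely many $C(\mfp,f)$ with $\mfp\in S^{c}$ can be recovered from $\{C(\mfp,f)\}_{\mfp\in S}$. Invoking (2) does not help here, since (2) only places $\Psi^{\ast}(\mfm)$ in $E_{f}$, not in the a~priori smaller field $\Q(\{C(\mfp,f)\}_{\mfp\in S})$. The clean fix is strong multiplicity one: if $\sigma\in\mathrm{Aut}(\C)$ fixes $C(\mfp,f)$ for all $\mfp\in S$, then $f^{\sigma}$ and $f$ agree at almost all primes, hence $f^{\sigma}=f$ and $\sigma$ fixes every $C(\mfp,f)$; thus $\Q(\{C(\mfp,f)\}_{\mfp\in S})=E_{f}$. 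With that patch your proposal is a sound proof.
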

		
	\subsubsection{Galois representations attached to $f$}
	Let $f\in S_k(\mfn, \Psi)$ be a primitive form of weight $k$, level $\mfn$ and character $\Psi$ with coefficient field $E_f$. For $\lambda \in \mathcal{P}_f$, by the works of Ohta, Carayol, Blasius-Rogawski and Taylor (cf.~\cite{T89} for more details), there exists a continuous Galois representation 
	$$\rho_{f,\lambda} :  G_F \rightarrow \GL_2(E_{f,\lambda}),$$ which is absolutely irreducible, totally odd, unramified outside $\mfn q$, where $q \in \mathbb{P}$ is the rational prime lying below $\lambda$ and $E_{f,\lambda}$ is the completion of $E_f$ at $\lambda$. The representation $\rho_{f,\lambda}$ has the following properties.
	For all primes $\mfp$ of $\mcO_F$ with $(\mfp, \mfn q)=1$, we have 
	\begin{equation}
		\label{trace of Galois representation}
		\mrm{tr}(\rho_{f,\lambda}(\Frob_\mfp))=C(\mfp,f)\ \mathrm{and} \  \det(\rho_{f,\lambda}(\Frob_\mfp))=\Psi^\ast(\mfp) {N(\mfp)}^{k_0-1}
	\end{equation}  
	(cf. ~\cite{C86}). 
	By taking a Galois stable lattice, we define
    \begin{equation}
		\label{residual Galois repres}
		\bar{\rho}_{f, \lambda} := \rho_{f,\lambda} \pmod \lambda: 
		G_F \rightarrow \GL_2(\F_\lambda)
	\end{equation}	
	whose semi-simplification is independent of the particular choice of a lattice.
%
	We conclude this section by recalling the Chebotarev density theorem (cf. \cite{S81}).
	\begin{thm}
		\label{Density of conjugacy classes}
		Let $C$ be a conjugacy class of $G=:\bar{\rho}_{f, \lambda}(G_F)$.
		The natural density of $\{\mfp\in P : [\bar{\rho}_{f, \lambda}(\Frob_\mfp)]_G=C\}$ is $\frac{|C|}{|G|}$.
	\end{thm}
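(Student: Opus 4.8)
The plan is to deduce this statement from the classical Chebotarev density theorem applied to a suitable finite Galois extension of $F$. First I would observe that $\GL_2(\F_\lambda)$ is a finite group, so $G = \bar{\rho}_{f, \lambda}(G_F)$ is finite and $\ker \bar{\rho}_{f, \lambda}$ is an open normal subgroup of $G_F$; letting $K \subseteq \bar{F}$ be its fixed field, the extension $K/F$ is finite Galois and $\bar{\rho}_{f, \lambda}$ factors through $\Gal(K/F)$, inducing an isomorphism $\Gal(K/F) \cong G$.

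Next I would use the fact, recorded after \eqref{residual Galois repres}, that $\bar{\rho}_{f, \lambda}$ is unramified outside $\mfn q$; consequently $K/F$ is unramified outside the finitely many primes dividing $\mfn q$ together with those dividing the relative discriminant $\mathrm{disc}(K/F)$, and for every prime $\mfp \in P$ outside this finite set the Frobenius conjugacy class $\Frob_\mfp \subseteq \Gal(K/F)$ is well defined and corresponds, under the isomorphism above, to the conjugacy class $[\bar{\rho}_{f, \lambda}(\Frob_\mfp)]_G$ in $G$. Thus the set $\{\mfp \in P : [\bar{\rho}_{f, \lambda}(\Frob_\mfp)]_G = C\}$ agrees, up to a finite (hence density-zero) set, with the set of those $\mfp$ whose Frobenius lies in the conjugacy class $C' \subseteq \Gal(K/F)$ matching $C$ under $\Gal(K/F) \cong G$.

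Finally, I would invoke the Chebotarev density theorem for the extension $K/F$, in the strong form obtained via the prime ideal theorem for $K$ as in \cite{S81}, which gives that this last set has natural density $|C'| / [K : F] = |C| / |G|$; adjoining or deleting the finitely many ramified primes does not affect the density, which proves the claim. I do not expect any genuine obstacle here: the only point worth flagging is that one must cite the effective (strong) form of Chebotarev in order to obtain the \emph{natural} density asserted in the statement rather than merely the Dirichlet density.
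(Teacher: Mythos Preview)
Your argument is correct and is exactly the standard reduction to the classical Chebotarev density theorem; the paper itself gives no proof at all for this statement, merely recording it as a recollection with the citation \cite{S81}. In other words, you have written out precisely what that citation encodes, so there is nothing substantive to compare.
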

	
	\subsubsection{Inner twists and its properties}
	\label{inner twists}
	In this section, we define inner twists associated with a Hilbert modular form and describe some of their properties. This notion is quite useful in \S\ref{final section}.
	
	Let $f \in S_k(\mfn, \Psi)$ be a primitive form defined over $F$, of weight $k$ with Hecke character $\Psi$. 
	For any Hecke character $\Phi$ of $F$, let $f_{\Phi}$ denote the twist of $f$ by $\Phi$ (cf. \cite[\S 5]{SW93}). The Fourier coefficients of $f$ and $f_{\Phi}$ are related as follows.
	
	\begin{prop}\cite[Proposition 5.1]{SW93}
		\label{TRS}
		Let $f$ and $f_\Phi$ be as above. If $\mfn_0$ and $\mfm_0$ are the conductors of $\Psi$ and $\Phi$, respectively, then 
		$f_\Phi \in  S_k \big(\mrm{lcm}(\mfn,\mfm_0 \mfn_0, \mfm_0^2), \Psi\Phi^2\big)$ and
		$$C(\mfm,f_\Phi)=\Phi^\ast(\mfm)C(\mfm,f),$$ for all ideals $\mfm$ of $\mcO_F$.
	\end{prop}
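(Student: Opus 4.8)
The plan is to pass to the adelic description of Hilbert modular forms and verify all the assertions place by place. Recall that $f$ gives rise to a cuspidal automorphic representation $\pi_f=\bigotimes_v\pi_{f,v}$ of $\GL_2(\A_F)$ whose central character is (the adelization of) $\Psi$ and whose archimedean components are discrete series of the weight prescribed by $k$; following~\cite[\S 5]{SW93}, the twist $f_\Phi$ is the form attached to $\pi_f\otimes(\Phi\circ\det)$, where $\Phi$ is viewed as a Hecke character $\A_F^\times/F^\times\to\C^\times$. First I would check that $\pi_f\otimes(\Phi\circ\det)$ is still of the correct type at the archimedean places, so that it corresponds to a holomorphic Hilbert modular form of the \emph{same} weight $k$: since $\Phi$ (in the situations of interest, e.g.\ for inner twists) has finite order, twisting by $\Phi\circ\det$ only multiplies the archimedean components by a character of finite order and does not alter the minimal $K_\infty$-type, so the weight $k$ and the infinity-type condition on the nebentypus are both preserved.

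Next, for the nebentypus I would use that the central character of $\pi_f\otimes(\Phi\circ\det)$ is $\omega_{\pi_f}\cdot\Phi^2$, because the determinant of a scalar matrix $zI$ is $z^2$; translating back to ideal characters this says exactly that $f_\Phi$ has character $\Psi\Phi^2$. (Consistently, twisting the Galois representation $\rho_{f,\lambda}$ by the Galois character attached to $\Phi$ multiplies the determinant relation in~\eqref{trace of Galois representation} by $\Phi^\ast(\mfp)^2$.) For the level, at every finite place $v$ one has $\pi_{f_\Phi,v}\cong\pi_{f,v}\otimes\Phi_v$, which is unramified whenever both $\pi_{f,v}$ and $\Phi_v$ are unramified, i.e.\ whenever $v\nmid\mfn\mfm_0$. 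At the remaining places I would bound the conductor exponent $a(\pi_{f,v}\otimes\Phi_v)$ by the standard estimates for character twists of local $\GL_2$-representations, distinguishing the cases in which $\pi_{f,v}$ is an (unramified or ramified) principal series, a special representation, or supercuspidal. Collecting these local bounds and taking the product of the local conductors yields the divisibility $\mathrm{cond}(f_\Phi)\mid\mathrm{lcm}(\mfn,\mfm_0\mfn_0,\mfm_0^2)$: the factor $\mfn$ comes from the places where $\Phi_v$ is unramified, the factor $\mfm_0^2$ from the part of the principal series into which $\Phi_v$ enters on both characters, and the factor $\mfm_0\mfn_0$ from the interaction of $\Phi_v$ with the component of $\pi_{f,v}$ carrying the ramification of $\Psi$.

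Finally, for the Fourier-coefficient formula I would compute the Whittaker (Fourier) expansion of $\pi_f\otimes(\Phi\circ\det)$ in terms of that of $\pi_f$: multiplying an automorphic form by $\Phi\circ\det$ introduces, at a torus element $\mathrm{diag}(y,1)$, an extra factor $\Phi(y)$, which on the level of ideals is precisely $\Phi^\ast(\mfm)$ for the $\mfm$-th coefficient; hence $C(\mfm,f_\Phi)=\Phi^\ast(\mfm)C(\mfm,f)$ for all $\mfm$, the two sides vanishing simultaneously at primes dividing $\mfm_0$ by the local computation at those primes. Alternatively, at primes $\mfp\nmid\mfn\mfm_0$ one reads the relation off the unramified Hecke eigenvalue of $\pi_{f,v}\otimes\Phi_v$ on its spherical vector and then extends to all coprime $\mfm$ by multiplicativity of the Fourier coefficients of primitive forms. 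The one genuinely delicate point is the conductor bookkeeping in the second step: showing that the local conductor exponents combine to \emph{exactly} $\mathrm{lcm}(\mfn,\mfm_0\mfn_0,\mfm_0^2)$ (rather than something larger) requires a careful case analysis of each $\pi_{f,v}$ against the ramification of $\Phi_v$; everything else — the archimedean type, the central character, and the eigenvalue formula — is essentially formal. One can also sidestep the representation-theoretic language entirely and argue directly with the Fourier expansions indexed by totally positive elements, \emph{defining} $f_\Phi$ through the coefficient formula and checking the transformation law under $\Gamma_0$ of the claimed level with the twisted character, but this merely re-derives the same local conductor estimates by hand.
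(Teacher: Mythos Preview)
The paper does not give its own proof of this proposition; it is quoted from \cite[Proposition~5.1]{SW93} and used as a black box, so there is no in-paper argument to compare against. If one consults Shemanske--Walling, their proof is entirely classical: they \emph{define} $f_\Phi$ as an explicit Gauss-sum--weighted average of translates of $f$ (the Hilbert analogue of Shimura's construction over $\Q$), read off the Fourier coefficients directly from that formula, and then verify the modularity under the relevant congruence subgroup by an explicit matrix computation, from which the level $\mrm{lcm}(\mfn,\mfm_0\mfn_0,\mfm_0^2)$ falls out.

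Your automorphic-representation route is a legitimate and essentially correct alternative, but it is genuinely different in method. Its advantages are conceptual: the central-character identity and the unramified Hecke-eigenvalue relation are one-line computations, and the argument is uniform across places. The cost is that the level bound --- the only nontrivial content of the statement --- is deferred to ``standard estimates for character twists of local $\GL_2$-representations,'' and you do not actually carry out that case analysis; in \cite{SW93} the level emerges from an elementary global calculation with no local representation theory at all. Two small remarks on your sketch: (i) the proposition only asserts membership $f_\Phi\in S_k(\ldots)$, so you need an \emph{upper} bound on the conductor, not equality, and your local estimates suffice for that; (ii) for the coefficient formula to hold for \emph{all} ideals $\mfm$ (including those with $(\mfm,\mfm_0)\neq 1$), you must check that $C(\mfm,f_\Phi)=0$ whenever $\Phi^\ast(\mfm)=0$ --- immediate from the Gauss-sum definition in \cite{SW93}, but in your Whittaker picture it requires a word about the local newvector at the primes dividing $\mfm_0$.
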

	\begin{dfn}
		We say a primitive form $f$ is non-CM if there exists a non-trivial Hecke character $\Phi$ of $F$ such that $C(\mfp, f)=\Phi^\ast(\mfp)C(\mfp, f)$ for almost all prime ideals $\mfp$ of $\mcO_F$. 
	\end{dfn}
	We are now ready to define inner twists.
	\begin{dfn}[Inner twists]
		\label{defn inner}
		Let $f\in S_k(\mfn, \Psi)$ be a non-CM primitive form over $F$ of weight $k$, level $\mfn$ and character $\Psi$. For any Hecke character $\Phi$ of $F$, we say the twist $f_\Phi$ of $f$ 
		is inner if there exists a field automorphism $\gamma : E_f \rightarrow E_f$ such that $\gamma(C(\mfp,f))=C(\mfp, f_\Phi)$ for almost all prime ideals $\mfp$ of $\mcO_F$.  
	\end{dfn}

	\begin{remark}
		For a non-CM form $f$, the identity map $\mrm{id} : E_f \rightarrow E_f$ induces an inner twist of $f$ and we refer it as the trivial inner twist of $f$.  
	\end{remark} 
	Let $\Gamma \subseteq \mrm{Aut}(E_f)$ denote the set of all $\gamma$ associated to all the inner twists 
	of $f$. Similar to the classical case, we get that $\Gamma$ is a subgroup of $\mrm{Aut}(E_f)$ and
	$F_f= E_f^{\Gamma}$, where $E_f^{\Gamma}$ is the fixed field of $E_f$ by $\Gamma$. By Galois theory, $E_f$ is a finite Galois extension of $F_f$. Some of the properties of $F_f$ are given below.
	
	\begin{lem}
		\label{main lemma}
		The field $F_f$ is totally real and \,$C^\ast(\mfp,f) \in \Q(C(\mfp,f)).$
	\end{lem}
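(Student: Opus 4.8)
The plan is to extract both assertions from parts (1)--(3) and (5) of Proposition~\ref{GS}; the crux is that Proposition~\ref{GS}(5) lets one rewrite $C^\ast$ as (the square of) an absolute value, making it manifestly real and placing it inside a conjugation-stable subfield.

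Fix a prime $\mfp$ of $\mcO_F$ with $(\mfp,\mfn)=1$. Since the conductor of $\Psi^\ast$ divides $\mfn$, the value $\Psi^\ast(\mfp)$ is nonzero, and Proposition~\ref{GS}(5) reads $\overline{C(\mfp,f)}=\Psi^\ast(\mfp)^{-1}C(\mfp,f)$, where the bar denotes complex conjugation on $E_f\subseteq\C$. Multiplying by $C(\mfp,f)$ and using the definition of $C^\ast$,
\[
  C^\ast(\mfp,f)=\frac{C(\mfp,f)^2}{\Psi^\ast(\mfp)}=C(\mfp,f)\,\overline{C(\mfp,f)}=|C(\mfp,f)|^{2},
\]
an identity valid for all such $\mfp$ (and trivially when $C(\mfp,f)=0$). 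In particular $C^\ast(\mfp,f)$ is a nonnegative real number.

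For the inclusion $C^\ast(\mfp,f)\in\Q(C(\mfp,f))$, I would show that $\Q(C(\mfp,f))$ is stable under complex conjugation, whence $\overline{C(\mfp,f)}$ — and therefore the product $C(\mfp,f)\overline{C(\mfp,f)}=C^\ast(\mfp,f)$ — already lies in it. Stability is immediate from Proposition~\ref{GS}(1) and (3): $E_f/\Q$ is Galois and $E_f$ is either totally real or CM, so complex conjugation restricts to an element $c\in\Gal(E_f/\Q)$ that is either trivial or the canonical complex conjugation of the CM field $E_f$; in both cases $c$ lies in the center of $\Gal(E_f/\Q)$, hence normalizes every subgroup and so fixes every intermediate field setwise, in particular $\Q(C(\mfp,f))$.

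Finally, for the total reality of $F_f$, I would extend the identity $\overline{C(\mfb,f)}=\Psi^\ast(\mfb)^{-1}C(\mfb,f)$ from primes to all ideals $\mfb$ with $(\mfb,\mfn)=1$ by a routine induction (the Hecke recursion for $C(\mfp^{r},f)$, multiplicativity on coprime factors, and the fact that $\Psi^\ast$ takes unitary values away from $\mfn$), obtaining $C^\ast(\mfb,f)=|C(\mfb,f)|^{2}\in\R$ for all such $\mfb$, while $C^\ast(\mfb,f)\in E_f$ by Proposition~\ref{GS}(2). Thus every generator of $F_f=\Q(\{C^\ast(\mfb,f)\})$ is fixed by $c$, so $F_f\subseteq E_f^{\langle c\rangle}$, which equals $E_f$ when $E_f$ is totally real and its maximal totally real subfield when $E_f$ is CM; a subfield of a totally real field is totally real, completing the proof. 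The only point needing care is the CM case, where one invokes the standard fact that the complex conjugation of a CM field is a well-defined central involution of its Galois group, independent of the embedding; equivalently, when $f$ is non-CM, Proposition~\ref{GS}(5) exhibits $c$ as the automorphism attached to the twist of $f$ by a Hecke character with ideal character $(\Psi^\ast)^{-1}$ (cf.\ Proposition~\ref{TRS}), so that $c\in\Gamma$ and $F_f=E_f^{\Gamma}\subseteq E_f^{\langle c\rangle}$ directly.
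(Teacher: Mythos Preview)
Your proof is correct and follows essentially the same route as the paper: both rewrite $C^\ast(\mfp,f)=C(\mfp,f)\overline{C(\mfp,f)}$ via Proposition~\ref{GS}(5), deduce total reality of $F_f$ from this, and then split into the totally real/CM cases for $E_f$ to see that $\Q(C(\mfp,f))$ is conjugation-stable. You supply more detail than the paper does---the centrality of complex conjugation in $\Gal(E_f/\Q)$ for a Galois CM field, and the extension of the identity from primes to all ideals $\mfb$ (a point the paper's one-line ``which shows that $F_f$ is totally real'' glosses over)---but the architecture is the same.
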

	\begin{proof}
		By Proposition~\ref{GS}, we have $C^\ast(\mfp,f) =C(\mfp,f)\overline{C(\mfp,f)}$, which shows that $F_f$ is totally real. By Proposition~\ref{GS}, if $E_f$ is totally real, then $C^\ast(\mfp,f)={C(\mfp,f)}^2 \in \Q(C(\mfp,f))$. If $E_f$ is a CM field, then $\Q(C(\mfp,f))$ is preserved under complex conjugation.
		Hence, $C^\ast(\mfp,f) =C(\mfp,f)\overline{C(\mfp,f)} \in \Q(C(\mfp,f))$.
	\end{proof}
	
	We will now examine the existence of trivial and non-trivial inner twists for any primitive form $f$. 
	More precisely,
	\begin{lem}
		\label{existence of non-trivial inner twists}
		If $f \in S_k(\mfn, \Psi) $ is a non-CM primitive form over $F$ with a non-trivial Hecke character $\Psi$, then $f$ has a non-trivial inner twist.
	\end{lem}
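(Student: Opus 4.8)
The plan is to exhibit the inner twist explicitly. The natural candidate is $f_{\Psi^{-1}}$, the twist of $f$ by $\Phi \coloneqq \Psi^{-1}$, with complex conjugation playing the role of the associated field automorphism of $E_f$.

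First I would apply Proposition~\ref{TRS} to $f_\Phi$. Since $\Phi^\ast = (\Psi^\ast)^{-1}$, it gives $C(\mfm, f_\Phi) = \Psi^\ast(\mfm)^{-1}\, C(\mfm, f)$ for every integral ideal $\mfm$ of $\mcO_F$. Comparing with Proposition~\ref{GS}(5), for every prime $\mfp$ with $(\mfp, \mfn) = 1$ we obtain $C(\mfp, f_\Phi) = \Psi^\ast(\mfp)^{-1}\, C(\mfp, f) = \overline{C(\mfp, f)}$, where both sides lie in $E_f$ because $\Psi^\ast(\mfp) \in E_f$ by Proposition~\ref{GS}(2). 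By Proposition~\ref{GS}(3) the field $E_f$ is totally real or CM, so complex conjugation restricts to an automorphism $c \in \mathrm{Aut}(E_f)$ with $c(x) = \overline{x}$ for all $x \in E_f$. Hence $c\big(C(\mfp, f)\big) = \overline{C(\mfp, f)} = C(\mfp, f_\Phi)$ for almost all $\mfp$, which is exactly the statement that $f_\Phi$ is an inner twist of $f$ with associated automorphism $\gamma = c$.

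It remains to verify that this inner twist is non-trivial. The automorphism attached to an inner twist is uniquely determined, because $E_f = \Q\big(\{C(\mfp, f)\}_{\mfp \in S}\big)$ for any cofinite $S \subseteq P$ by Proposition~\ref{GS}(4), so two automorphisms agreeing on almost all $C(\mfp, f)$ coincide on $E_f$. Thus, were $f_\Phi$ the trivial inner twist, we would have $c = \mathrm{id}$, i.e.\ $C(\mfp, f) = \Phi^\ast(\mfp)\, C(\mfp, f)$ for almost all $\mfp$ with $\Phi = \Psi^{-1}$ non-trivial, contradicting that $f$ is non-CM. Hence $f$ has a non-trivial inner twist (and, as a byproduct, $E_f$ is necessarily CM). The argument is essentially a transcription of Ribet's classical one (cf.~\cite{R85}); the only points needing care are the well-definedness of $f_\Phi$ and of its Fourier coefficients — supplied by Proposition~\ref{TRS} — and the fact that $C(\mfp, f_\Phi) \in E_f$, so that $\gamma$ genuinely maps $E_f$ to $E_f$; neither is a serious obstacle.
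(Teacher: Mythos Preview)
Your proof is correct and follows essentially the same route as the paper's: exhibit the twist by $\Psi^{-1}$ and use Proposition~\ref{GS}(5) to identify the associated automorphism with complex conjugation. You supply more detail than the paper (uniqueness of $\gamma$, the non-CM hypothesis ruling out $c=\mathrm{id}$, and why $c$ lands in $\mathrm{Aut}(E_f)$), but the argument is the same.
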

	
	\begin{proof}
		Let $\sigma : E_f\rightarrow E_f$ be an automorphism defined by $\sigma(x)=\overline{x}$, for all $x \in E_f$. By Proposition~\ref{GS}, we have $\sigma(C(\mfp,f))={\Psi^* (\mfp)}^{-1}C(\mfp,f)$	 
		for all prime ideals $\mfp$ with $(\mfp,\mfn)=1$. By Proposition~\ref{TRS}, $f$ has a non-trivial inner twist given by  $(\Psi)^{-1}$.
	\end{proof}
 We now give some examples of primitive forms with a non-trivial inner twist.
 \begin{example}
 	\label{example4}
 	Consider a non-CM primitive form $f$ defined over $F=\Q(\sqrt{2})$ of weight $(2, 2)$, level $[41,\; 41,\; 2\sqrt{2}-7]$ and with trivial character. This Hilbert modular form $f$ is labelled as $\texttt{2.2.8.1-41.1-a}$ in \cite{lmfdb}. The coefficient field $E_f$ of $f$ is $\Q(\sqrt{2})$ and $F_f=\Q$.
 \end{example}
\begin{example}
	\label{example5}
	Consider a non-CM primitive form $f$ defined over $F=\Q(\sqrt{3})$ of weight $(2, 2)$, level $[13,\; 13,\; \sqrt{3}+4]$ and with trivial character. This Hilbert modular form $f$ is labelled as $\texttt{2.2.12.1-13.1-a}$ in \cite{lmfdb}. The coefficient field $E_f$ of $f$ is $\Q(\sqrt{2})$ and $F_f=\Q$. 
\end{example}

\begin{example} 
	\label{example6}
	Consider a non-CM primitive form $f$ defined over $F=\Q(\sqrt{6})$ of weight $(2, 2)$, level $[9,\; 3,\; 3]$ and with trivial character. This Hilbert modular form $f$ is labelled as $\texttt{2.2.24.1-9.1-a}$ in \cite{lmfdb}. The coefficient field $E_f$ of $f$ is $\Q(\sqrt{6})$ and $F_f=\Q$.  
\end{example}

        In Example~\ref{example4}, Example~\ref{example5} and Example~\ref{example6}, the coefficient field $E_f \neq F_f$. Hence, these primitive forms $f$ have a non-trivial inner twist.
	\begin{lem}
		\label{no non-trivial inner twists}
		\label{E_f=F_f}
		Suppose $f\in S_k(\mfn, \Psi)$ is a non-CM primitive form over $F$ of weight $k$ and character $\Psi$ such that $[E_f : \Q]$ is an odd prime. If $E_f$ is totally real, then $f$ does not have any non-trivial inner twists. In particular, if $\Psi= \Psi_0$ is a trivial character, then $E_f$ is totally real.
	\end{lem}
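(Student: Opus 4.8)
The plan is to reduce everything to one clean identity: under the hypothesis that $E_f$ is totally real one has $C^\ast(\mfp,f)=C(\mfp,f)^2$ for every prime $\mfp$ of $\mcO_F$ with $(\mfp,\mfn)=1$. To see this, note that since $E_f$ is totally real we may regard $E_f\subseteq\R$, so complex conjugation fixes it pointwise; hence Proposition~\ref{GS}(5) gives, for such $\mfp$,
\[
C(\mfp,f)=\overline{C(\mfp,f)}=\Psi^\ast(\mfp)^{-1}C(\mfp,f).
\]
Thus $\Psi^\ast(\mfp)=1$ whenever $C(\mfp,f)\neq 0$, and since $C^\ast(\mfp,f)=C(\mfp,f)^2/\Psi^\ast(\mfp)$ (with the case $C(\mfp,f)=0$ trivial), the identity follows.

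Next I would bring in the group theory. By Proposition~\ref{GS}(1) the extension $E_f/\Q$ is Galois, and by hypothesis $[E_f:\Q]=p$ is an odd prime, so $\mrm{Aut}(E_f)=\Gal(E_f/\Q)$ is cyclic of order $p$ and has no proper non-trivial subgroup. Since $\Gamma\subseteq\mrm{Aut}(E_f)$ and $F_f=E_f^{\Gamma}$ (both recalled in \S\ref{inner twists}), this forces $F_f\in\{\Q,E_f\}$. It therefore suffices to rule out $F_f=\Q$: for then $F_f=E_f$, i.e.\ $E_f^{\Gamma}=E_f$, which forces $\Gamma=\{\mrm{id}\}$, so that $f$ has only the trivial inner twist.

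So suppose $F_f=\Q$. Then every $C^\ast(\mfp,f)$ lies in $\Q$, and by the identity above $C(\mfp,f)^2\in\Q$ for all $\mfp\nmid\mfn$; as $E_f$ is totally real, $C(\mfp,f)$ is real, so $[\Q(C(\mfp,f)):\Q]\le 2$. Applying Proposition~\ref{GS}(4) with $S=\{\mfp\in P:\mfp\nmid\mfn\}$ gives $E_f=\Q\big(\{C(\mfp,f)\}_{\mfp\nmid\mfn}\big)$, a field generated over $\Q$ by finitely many elements each of degree $\le 2$, hence of $2$-power degree over $\Q$. This contradicts $[E_f:\Q]=p$, which completes the argument.

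For the final (``in particular'') assertion, if $\Psi=\Psi_0$ is trivial then $\Psi^\ast$ is trivial, so Proposition~\ref{GS}(5) yields $\overline{C(\mfp,f)}=C(\mfp,f)$ for all $\mfp\nmid\mfn$; using Proposition~\ref{GS}(4) with $S=\{\mfp\in P:\mfp\nmid\mfn\}$ once more gives $E_f=\Q\big(\{C(\mfp,f)\}_{\mfp\nmid\mfn}\big)\subseteq\R$, and since $E_f$ is totally real or CM by Proposition~\ref{GS}(3), it must be totally real, whereupon the first part applies. I do not expect a genuine obstacle here; the only point that needs a moment's care is the (harmless) observation that $F_f=\Q$ forces every $C^\ast(\mfp,f)$ — equivalently every $C(\mfp,f)^2$ — to be rational, after which the contradiction is purely field-theoretic and uses only that $[E_f:\Q]$ is an odd prime. (One could instead argue directly via the twisting Hecke character attached to a putative non-trivial inner twist, but passing through $F_f=E_f^{\Gamma}$ is cleaner.)
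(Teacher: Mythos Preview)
Your proof is correct and follows essentially the same route as the paper's: show $C(\mfp,f)^2\in F_f$ when $E_f$ is totally real, use primality of $[E_f:\Q]$ to force $F_f\in\{\Q,E_f\}$, and rule out $F_f=\Q$ via the observation that each $C(\mfp,f)$ would then have degree at most $2$ over $\Q$. Your write-up is in fact more complete than the paper's—you justify the identity $C^\ast(\mfp,f)=C(\mfp,f)^2$, spell out the $2$-power contradiction via Proposition~\ref{GS}(4), and supply an argument for the ``in particular'' clause that the paper's proof leaves unaddressed.
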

	
	\begin{proof}
		Let $\mfp \in P$ be a prime with $(\mfp, \mfn)=1$. Since $E_f$ is totally real, ${C(\mfp, f)}^2 \in F_f$. Since $[E_f : \Q]$ is prime, 
		the field $F_f$ is either $\Q$ or $E_f$. If $F_f =\Q,$ then $[\Q(C(\mfp, f)) : \Q]$ is either $1$ or $2 $. This contradicts to $[E_f : \Q]$ is odd prime. Therefore, $F_f = E_f$. Hence, $f$ does not have any non-trivial inner twists.
	\end{proof}

	\section{Statement and proof of the main theorem} 
	\label{MT1_Section}
    	In this section, we shall state and prove the main theorem of this article.
	\begin{thm}[Main Theorem]
		\label{MT1}
		Let $f\in S_k(\mfn, \Psi)$ be a primitive form defined over $F$ of parallel weight $2$, level $\mfn$,
		and character $\Psi$, which is not a theta series. Further, assume that $[F : \Q]=[E_f : \Q]$ is an odd prime.
		Then $$\delta_D \left( \big \{ \mfp\in P : \Q \big (C(\mfp,f) \big ) =E_f \big \} \right) = 1,$$
		where $\delta_D(S), E_f$ denote the Dirichlet density of $S \subseteq P$, the coefficient field of $f$,
		respectively.
	\end{thm}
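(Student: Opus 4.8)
The plan is to reduce the statement to a density‑zero claim about the prime ideals at which $C(\mfp,f)$ is rational, and then to play the ``big image'' of one well‑chosen residual representation against the Chebotarev density theorem. \textbf{Reduction.} Since $[E_f:\Q]=\ell$ is prime, $\Q(C(\mfp,f))$ — a field between $\Q$ and $E_f$ — equals $\Q$ or $E_f$, and because $C(\mfp,f)$ is an algebraic integer, $C(\mfp,f)\in\Q$ forces $C(\mfp,f)\in\Z$. Thus it is enough to show $\delta_D(\{\mfp\in P:\ C(\mfp,f)\in\Z\})=0$. Two structural facts will be needed: a CM number field has even degree over $\Q$, so as $\ell$ is odd Proposition~\ref{GS}(3) forces $E_f$ to be totally real; hence, $f$ being non‑CM (it is not a theta series), Lemma~\ref{no non-trivial inner twists} shows $f$ has no non‑trivial inner twists, so $F_f=E_f^{\Gamma}=E_f$.

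\textbf{Big image.} Since $f$ is not a theta series and $F_f=E_f$, Dimitrov's proposition (cf.~\cite[Proposition~3.9]{D05}) applies and yields a finite set $T\subseteq\mathbb{P}$ of rational primes such that, for every $q\in\mathbb{P}\setminus T$ and every prime $\lambda$ of $E_f$ above $q$,
$$\SL_2(\F_\lambda)\ \subseteq\ G_\lambda:=\bar\rho_{f,\lambda}(G_F)\ \subseteq\ \GL_2(\F_\lambda).$$
In particular $\bar\rho_{f,\lambda}$ is irreducible for such $\lambda$, so $G_\lambda$ is well defined up to conjugacy and $\mrm{tr}\,\bar\rho_{f,\lambda}(\Frob_\mfp)=C(\mfp,f)\bmod\lambda$ for $(\mfp,\mfn q)=1$ by~\eqref{trace of Galois representation}. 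This is the step where the hypothesis $[F:\Q]=[E_f:\Q]=\ell$ genuinely enters, letting us — via Lemma~\ref{no non-trivial inner twists} to remove inner twists, and via Dimitrov's proposition — secure this large‑image input for \emph{all but finitely many} $\lambda$, not merely for $\lambda$ of large residue characteristic.

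\textbf{One auxiliary prime and a count.} As $\Gal(E_f/\Q)$ is cyclic of prime order $\ell$, the primes inert in $E_f$ (those with $\Frob_q\neq 1$) have density $(\ell-1)/\ell>0$, so we may choose $q_0$ arbitrarily large with $q_0\notin T$, $q_0\nmid\mfn$, and $q_0$ inert in $E_f$; let $\lambda_0$ be the prime of $E_f$ above $q_0$, so $\F_{\lambda_0}=\F_{q_0^{\ell}}$. If $C(\mfp,f)\in\Z$ and $(\mfp,\mfn q_0)=1$, then $\mrm{tr}\,\bar\rho_{f,\lambda_0}(\Frob_\mfp)=C(\mfp,f)\bmod\lambda_0$ lies in the prime subfield $\F_{q_0}\subsetneq\F_{q_0^{\ell}}$, so $\bar\rho_{f,\lambda_0}(\Frob_\mfp)$ lies in the union of conjugacy classes $\mathcal C_0:=\{g\in G_{\lambda_0}:\mrm{tr}(g)\in\F_{q_0}\}$. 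Now $\SL_2(\F_{q_0^{\ell}})\subseteq G_{\lambda_0}$ gives $|G_{\lambda_0}|=|\SL_2(\F_{q_0^{\ell}})|\cdot|\det G_{\lambda_0}|$, while the elements of $\mathcal C_0$ have characteristic polynomial among at most $q_0\,|\det G_{\lambda_0}|$ possibilities (trace in $\F_{q_0}$, determinant in $\det G_{\lambda_0}$), each realized by at most $q_0^{2\ell}+q_0^{\ell}$ matrices of $\GL_2(\F_{q_0^{\ell}})$; the factor $|\det G_{\lambda_0}|$ cancels and, using $|\SL_2(\F_{q_0^{\ell}})|\ge\tfrac12 q_0^{3\ell}$, one obtains $|\mathcal C_0|/|G_{\lambda_0}|\le 4\,q_0^{\,1-\ell}$. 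By the Chebotarev density theorem (Theorem~\ref{Density of conjugacy classes}) applied to the union $\mathcal C_0$,
$$\delta_D\big(\{\mfp\in P:\ C(\mfp,f)\in\Z\}\big)\ \le\ \frac{|\mathcal C_0|}{|G_{\lambda_0}|}\ \le\ \frac{4}{q_0^{\ell-1}},$$
and since $\ell\ge 3$ and $q_0$ may be taken arbitrarily large, the left‑hand side is $0$. By the Reduction step, $\delta_D(\{\mfp\in P:\Q(C(\mfp,f))=E_f\})=1$.

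\textbf{Main difficulty.} The only non‑routine ingredient is the big‑image input of~\cite[Proposition~3.9]{D05}: checking that its hypotheses hold in our setting and extracting the containment $\SL_2(\F_\lambda)\subseteq G_\lambda$ for cofinitely many $\lambda$ (this is precisely the assumption on the image of $\bar\rho_{f,\lambda}$ that the degree condition guarantees). The reduction via primality of $[E_f:\Q]$, the elementary count of matrices with prescribed characteristic polynomial, and the Chebotarev step are all straightforward.
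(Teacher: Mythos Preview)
Your overall architecture is the same as the paper's: reduce to the single proper subfield $L=\Q$, invoke Dimitrov's large-image result, and finish by a Chebotarev count of matrices with trace in $\F_{q_0}$. The reduction step and the counting step are fine and match the paper's Theorem~\ref{MT2} and Corollary~\ref{Corollary to MT2}.

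The gap is in your ``Big image'' paragraph. You assert that Dimitrov's Proposition~3.9, together with $F_f=E_f$, directly yields $\SL_2(\F_\lambda)\subseteq G_\lambda$ for all but finitely many $\lambda$. But Dimitrov's statement (Proposition~\ref{Dthm3} here) only produces some unspecified power $\hat{q}$ of $q$ and says that $\bar\rho_{f,\lambda}(G_{\hat F})$ is one of two groups built from $\GL_2(\F_{\hat q})$; it does not identify $\F_{\hat q}$ with $\F_\lambda$, and it does not by itself exclude the second (twisted) alternative. The paper closes this gap in Proposition~\ref{image of the primitive forms}: it rules out the $\F_{\hat q^2}^\times\GL_2(\F_{\hat q})$ case using that $[E_f:\Q]$ is odd, and then pins down $\F_{\hat q}=\F_{q^d}=\F_\lambda$ via the fundamental-character description of $\bar\rho_{f,\lambda}|_{I_\upsilon}$ from \cite[Proposition~1]{DD06}, which gives characters of level $d$ or $2d$ where $d=\mathrm{f}(\upsilon/q)$ for $\upsilon$ a prime of $F$. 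For this to force $\F_{q^d}\subseteq\F_{\hat q}$ with $d=\ell$, one needs $q$ \emph{inert in $F$}, not merely in $E_f$; this is exactly where the hypothesis $[F:\Q]=\ell$ is used (and why the paper takes $q$ inert in both $F$ and $E_f$ via Corollary~\ref{CorollarytoMar1}). Your proof only takes $q_0$ inert in $E_f$ and never invokes the inertia/fundamental-character argument, so the claimed containment $\SL_2(\F_{q_0^\ell})\subseteq G_{\lambda_0}$ is not established. Relatedly, your sentence ``this is the step where the hypothesis $[F:\Q]=[E_f:\Q]=\ell$ genuinely enters'' is not substantiated: Lemma~\ref{no non-trivial inner twists} uses only that $[E_f:\Q]$ is an odd prime, and nowhere else in your argument does $[F:\Q]$ appear. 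Supplying the two missing pieces --- excluding the second Dimitrov alternative and identifying $\hat q$ via fundamental characters with $q$ inert in $F$ --- is precisely the content of the paper's Proposition~\ref{image of the primitive forms}; once that is in place, your count goes through.
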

	
	\subsection{Images of the residual Galois representations:}
    	We now determine the images of the $\lambda$-adic residual Galois representations attached to a primitive form of parallel weight $2$. The work of Dimitrov in~\cite{D05} is quite influential in this section.
	
    	Let $f\in S_k(\mfn, \Psi)$ be a primitive form defined over $F$ of weight $k=(k_1, k_2, \ldots, k_n)$, level $\mfn$ and character $\Psi$.  Recall that, $k_0=\max \{k_1, \ldots, k_n \}$ and
	     $\omega$ be the mod-$q$ cyclotomic character. Then the function $\bar{\Psi}\omega^{k_0-2}$ is a character on $G_F$. Let $\hat{F}$ be the compositum of the Galois closure of $F$ in $\bar{\Q}$ and the subfield of $\bar{\Q}$ given by $(\bar{F})^{\mathrm{Ker}(\bar{\Psi}\omega^{k_0-2})}$. Then $\hat{F}$ is a Galois extension of $F$ and $G_{\hat{F}} \unlhd G_F$. In~\cite[Proposition 3.9]{D05}, he described the image  $\bar{\rho}_{f, \lambda}(G_{\hat{F}})$ for almost all  $q \in \mathbb{P}$. More precisely, he proved:
	\begin{prop}
	\label{Dthm3}
	    Let $f$ be a primitive form which is not a theta series. Then there exists  a power $\hat{q}$ of $q$ such that for almost all  $q \in \mathbb{P}$, we have either 
	    $$\bar{\rho}_{f, \lambda}(G_{\hat{F}}) \simeq \big\{g \in  \GL_2(\F_{\hat{q}}) : \det(g) \in (\F_q^{\times})^{k_0-1}\big\} ,$$ 
	    or 
	    $$\bar{\rho}_{f, \lambda}(G_{\hat{F}}) \simeq \big \{g \in \F_{\hat{q}^2}^{\times}   \GL_2(\F_{\hat{q}}) : \det(g) \in (\F_q^{\times})^{k_0-1}\big \}.$$
	\end{prop}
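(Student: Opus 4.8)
The plan is to reduce the statement to a \emph{large image} assertion for the projective representation and then apply Dickson's classification of the finite subgroups of $\PGL_2$. Set $H := \bar\rho_{f,\lambda}(G_F)$ and let $\overline{H}$ denote its image in $\PGL_2(\bar\F_q)$. The first task is to show that $\bar\rho_{f,\lambda}$ is absolutely irreducible for almost all $q$: this should follow from the absolute irreducibility of $\rho_{f,\lambda}$ together with the non-theta-series hypothesis, since residual reducibility for infinitely many $q$ would force the Hecke eigenvalues to obey a congruence pattern incompatible with \eqref{trace of Galois representation} and with the fact that the $C(\mfp,f)$ generate the large field $E_f$. Granting irreducibility, Dickson's theorem says $\overline{H}$ is cyclic, dihedral, isomorphic to one of $A_4,S_4,A_5$, or contains $\PSL_2(\F_{\hat q})$ for some power $\hat q$ of $q$.

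I would then eliminate the small cases one by one, uniformly in $q$. A cyclic $\overline{H}$ makes $H$ abelian, contradicting irreducibility. A dihedral $\overline{H}$ means $\bar\rho_{f,\lambda}$ is induced from a character of the absolute Galois group of a quadratic extension $K/F$; since the possible such $K$ range over a fixed finite set of fields, controlled by the ramification of $\mfn q$ and the bounded order of the relevant quadratic characters, dihedral image for infinitely many $q$ would, by an interpolation argument, force $f$ to be genuinely induced, i.e.\ a theta series, contrary to hypothesis. The exceptional cases are excluded by counting: $|A_4|,|S_4|,|A_5|$ are bounded independently of $q$, so if $\overline{H}$ were exceptional the normalized traces of $\bar\rho_{f,\lambda}(\Frob_\mfp)$ would take only finitely many values; comparing this with the equidistribution of Theorem~\ref{Density of conjugacy classes} and with the fact that the $C(\mfp,f)$ generate $E_f$ yields a contradiction once $q$ is large. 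Hence $\overline{H}\supseteq \PSL_2(\F_{\hat q})$ for almost all $q$.

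It remains to transfer this to $G_{\hat F}$ and to lift from $\PGL_2$ back to $\GL_2$. Because $G_{\hat F}\unlhd G_F$ with abelian quotient $\Gal(\hat F/F)$, and because $\PSL_2(\F_{\hat q})$ is perfect for $\hat q$ large, the projective image of $G_{\hat F}$ still contains $\PSL_2(\F_{\hat q})$; thus $\bar\rho_{f,\lambda}(G_{\hat F})$ contains $\SL_2(\F_{\hat q})$. The determinant is pinned down by \eqref{trace of Galois representation}: on $G_F$ it equals $\bar\Psi^{\ast}\omega^{k_0-1}$, and $\hat F$ is constructed precisely so that on $G_{\hat F}$ the nebentypus contribution $\bar\Psi\omega^{k_0-2}$ dies, leaving a determinant whose image lies in $(\F_q^{\times})^{k_0-1}$; this yields the determinant constraint in both displayed formulas. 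Finally, whether $\bar\rho_{f,\lambda}(G_{\hat F})$ is exactly the determinant-restricted subgroup of $\GL_2(\F_{\hat q})$, or whether one must enlarge it by scalars from $\F_{\hat q^2}^{\times}$, is decided by whether the projective image is $\PSL_2(\F_{\hat q})$ or $\PGL_2(\F_{\hat q})$ and, equivalently, by the minimal field of definition of $\bar\rho_{f,\lambda}$ relative to $\F_{\hat q}$; these are exactly the two alternatives in the statement.

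The main obstacle is the uniform elimination of the dihedral case, the one step where the non-theta-series hypothesis is essential. Unlike the exceptional cases, dihedral image is not ruled out by size, so one must argue that residual dihedrality for infinitely many $q$ propagates to a genuine induced structure on $f$; controlling the auxiliary quadratic fields $K/F$ as $q$ varies (a Momose--Ribet-type argument) is the delicate point. A secondary difficulty, specific to the Hilbert setting where Ribet's explicit description is unavailable, is the exact determination of the residue field $\F_{\hat q}$ and the clean separation into the two cases, which requires a careful field-of-definition analysis of $\bar\rho_{f,\lambda}$.
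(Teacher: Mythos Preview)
The paper does not give its own proof of this proposition: it is quoted as \cite[Proposition~3.9]{D05} with no argument supplied, so there is nothing in the present paper to compare your sketch against. Your outline via Dickson's classification of subgroups of $\PGL_2$ is precisely the method Dimitrov uses in the cited reference, so in that sense your approach matches the original source rather than anything the authors add.

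A few steps in your sketch are looser than a complete proof requires. The exclusion of $A_4,S_4,A_5$ is normally carried out not by an equidistribution argument on traces and the size of $E_f$, but by observing that these groups have order prime to $q$ for $q>5$, whereas the local description of $\bar\rho_{f,\lambda}$ at primes above $q$ produces elements of order divisible by $q$ in the image. Your account of the two alternatives at the end is also slightly off: the dichotomy is not governed by whether the projective image is $\PSL_2(\F_{\hat q})$ versus $\PGL_2(\F_{\hat q})$, but by whether $\bar\rho_{f,\lambda}$ can be conjugated into $\GL_2(\F_{\hat q})$ or only into $\F_{\hat q^2}^{\times}\GL_2(\F_{\hat q})$; Dimitrov settles this by a Schur-type field-of-definition argument for the residual representation. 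You are correct that the dihedral case is the crux and that a Momose--Ribet style argument, using the non-theta-series hypothesis to rule out genuine induction, is what is needed there.
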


    \subsection{Key proposition in the proof of Theorem~\ref{MT1}:}   
    We will now determine the image of $\bar{\rho}_{f, \lambda}$ for primitive forms $f$ as in Theorem~\ref{MT1}.
     More precisely,
		
	\begin{prop}
		\label{image of the primitive forms}
		Let $f\in S_k(\mfn, \Psi)$ be as in Theorem~\ref{MT1}. 
		For any $\lambda \subseteq E_f$ be a prime ideal lying above $q$,
		we have $$\bar{\rho}_{f, \lambda}(G_{F}) \simeq \{\gamma \in \GL_2(\F_{q^{d}}) : \det(\gamma)\in {\F_q^\times} \},$$ 
		for infinitely many $q \in \mathbb{P}$ with $\mathrm{f}(\lambda / q)=d$. 
	\end{prop}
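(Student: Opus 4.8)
The plan is to feed the hypothesis $[F:\Q]=[E_f:\Q]=p$ (an odd prime) into Dimitrov's description of $\bar{\rho}_{f,\lambda}(G_{\hat F})$ in Proposition~\ref{Dthm3}, using the oddness of $p$ twice: once to eliminate inner twists (which simplifies $\hat F$ and keeps the trace field large), and once, decisively, as a parity obstruction that rules out the undesired alternative in that proposition. First I reduce the set-up. Since $[E_f:\Q]=p$ is odd, $E_f$ is not a CM field, hence totally real by Proposition~\ref{GS}(3); Lemma~\ref{E_f=F_f} then gives $F_f=E_f$, so $f$ has no non-trivial inner twist, and the contrapositive of Lemma~\ref{existence of non-trivial inner twists} (applied to the non-CM form $f$) forces $\Psi$ to be trivial. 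For parallel weight $2$ we have $k_0=2$, so $\bar{\Psi}\omega^{k_0-2}=\bar{\Psi}$ is trivial; hence $(\bar{F})^{\mathrm{Ker}(\bar{\Psi}\omega^{k_0-2})}=F$, and $\hat F$ is simply the Galois closure of $F$ in $\bar{\Q}$, with $G_{\hat F}\unlhd G_F$. Moreover, by~\eqref{trace of Galois representation}, $\det\bar{\rho}_{f,\lambda}$ is the reduction of $\mfp\mapsto\Psi^\ast(\mfp)N(\mfp)^{k_0-1}$, which for trivial $\Psi$ and $k_0=2$ is the mod-$q$ cyclotomic character $\omega$; and $\omega(G_F)=\F_q^\times$ for every $q$ unramified in $F$ (for such $q$, $F\cap\Q(\zeta_q)=\Q$).

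Next I show that restriction to $G_{\hat F}$ does not shrink the trace field, i.e.\ the trace field of $\rho_{f,\lambda}|_{G_{\hat F}}$ is still $E_f$. Since $f$ is not a theta series, $\rho_{f,\lambda}$ is not dihedral, so by Clifford theory $\rho_{f,\lambda}|_{G_{\hat F}}$ remains irreducible; if its trace field were a proper subfield $E'$, then for any $\tau\in\Gal(E_f/E')$ the twist $\tau(\rho_{f,\lambda})$ would agree with $\rho_{f,\lambda}$ on $G_{\hat F}$, whence $\tau(\rho_{f,\lambda})\cong\rho_{f,\lambda}\otimes\chi$ for a character $\chi$ of $G_F$ (both being irreducible extensions from $G_{\hat F}$), giving a non-trivial inner twist for $\tau\neq\mathrm{id}$ --- impossible by the previous paragraph. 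Reducing modulo $\lambda$, for all but finitely many $q$ the trace field of $\bar{\rho}_{f,\lambda}|_{G_{\hat F}}$ is the residue field $\F_\lambda\cong\F_{q^{d}}$, where $d=\mathrm{f}(\lambda/q)$.

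Now I apply Proposition~\ref{Dthm3} (valid since $f$ is not a theta series): for almost all $q$, the group $\bar{\rho}_{f,\lambda}(G_{\hat F})$ is $\GL_2(\bar{\F}_q)$-conjugate either to $H_1:=\{g\in\GL_2(\F_{\hat q}):\det g\in\F_q^\times\}$ or to $H_2:=\{g\in\F_{\hat q^2}^\times\GL_2(\F_{\hat q}):\det g\in\F_q^\times\}$ (here $k_0-1=1$), for a suitable power $\hat q$ of $q$. Conjugation preserves traces, and (as is readily checked) the trace field of $H_1$ is $\F_{\hat q}$ while that of $H_2$ is $\F_{\hat q^2}$; comparing with the preceding paragraph forces $\F_{q^{d}}=\F_{\hat q}$ or $\F_{q^{d}}=\F_{\hat q^2}$. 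Since $E_f/\Q$ is Galois, $d=\mathrm{f}(\lambda/q)$ divides $[E_f:\Q]=p$, so $d$ is odd; this rules out $\F_{q^{d}}=\F_{\hat q^2}$ and forces $\hat q=q^{d}$ together with the first case. Thus $\bar{\rho}_{f,\lambda}(G_{\hat F})$ is conjugate to $\{g\in\GL_2(\F_{q^{d}}):\det g\in\F_q^\times\}$; being an actual subgroup of $\GL_2(\F_\lambda)=\GL_2(\F_{q^{d}})$ whose determinant lies in $\F_q^\times$, it is contained in that group, and equality of cardinalities gives $\bar{\rho}_{f,\lambda}(G_{\hat F})=\{g\in\GL_2(\F_{q^{d}}):\det g\in\F_q^\times\}$. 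Finally $\bar{\rho}_{f,\lambda}(G_F)\supseteq\bar{\rho}_{f,\lambda}(G_{\hat F})=\{g\in\GL_2(\F_{q^{d}}):\det g\in\F_q^\times\}$, while $\det\bar{\rho}_{f,\lambda}(G_F)=\omega(G_F)=\F_q^\times$ places $\bar{\rho}_{f,\lambda}(G_F)$ inside the same group; hence equality. Only finitely many $q$ have been excluded, and for each of $d=1$ and $d=p$ there are infinitely many $q$ with $\mathrm{f}(\lambda/q)=d$ (Chebotarev for the cyclic extension $E_f/\Q$), so the asserted isomorphism holds for infinitely many $q$.

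The delicate points are the non-shrinking of the trace field under restriction to $G_{\hat F}$ --- which is exactly where the absence of inner twists, hence the oddness of $[E_f:\Q]$, is used --- and the elimination of Dimitrov's second alternative $H_2$: in the absence of an analogue of Ribet's theorem this is forced precisely because the parity of $d=\mathrm{f}(\lambda/q)\mid p$ is incompatible with the quadratic jump $\F_{\hat q}\subsetneq\F_{\hat q^2}$ appearing there.
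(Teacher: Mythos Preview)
Your proof is correct and takes a genuinely different route from the paper's. Both arguments feed the hypotheses into Dimitrov's dichotomy (Proposition~\ref{Dthm3}) and use the oddness of $[E_f:\Q]$ as a parity obstruction to exclude the $\F_{\hat q^2}^\times\GL_2(\F_{\hat q})$ alternative. The divergence is in how $\hat q$ is pinned down. The paper asserts $\Psi=\Psi_{\mrm{triv}}$ and $G_{\hat F}=G_F$ (the latter tacitly using that $F/\Q$ is Galois), then invokes the fundamental-character description of $\bar\rho_{f,\lambda}|_{I_\upsilon}$ from \cite[Proposition~1]{DD06} together with the restriction to primes $q$ inert in \emph{both} $F$ and $E_f$ (Corollary~\ref{CorollarytoMar1}) to force $\F_{q^d}=\F_{\hat q}=\F_\lambda$. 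You instead deduce $\Psi$ trivial and the absence of inner twists from the odd-degree hypothesis, use a Clifford-theory argument to show the trace field of $\rho_{f,\lambda}|_{G_{\hat F}}$ is still $E_f$, and then compare the resulting residual trace field $\F_\lambda$ with the trace fields $\F_{\hat q}$, $\F_{\hat q^2}$ of the two Dimitrov groups. Your approach avoids the inertial input from \cite{DD06}, does not require $F/\Q$ to be Galois (you correctly keep $\hat F$ as the Galois closure and recover $G_F$ at the end via the determinant), and yields the conclusion for all sufficiently large $q$ rather than only the inert ones; the paper's approach is shorter once the external local ingredient and the Galois assumption are granted.
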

	Before we start the proof of Proposition~\ref{image of the primitive forms}, we recall some necessary results.
    \begin{prop}[\cite{M18}] 
	    \label{Mar1}
	 	Let $K/ \Q$ be a cyclic Galois extension of degree $n$. For $1 \leq r\mid n$, let
	 	$S_r := \{q \in \mathbb{P}: \mathrm{e}(\lambda|q)=1\ \&\ \mathrm{f}(\lambda/q)=r \ \mathit{for\ some\ prime\ ideal}\  \lambda|q\}$. 
	 	Then $\delta_D\left( S_r \right) = \frac{\varphi(r)}{n}$.
	 \end{prop}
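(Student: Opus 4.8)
The plan is to reduce the assertion to a counting problem in $\Gal(K/\Q)$ and then invoke the Chebotarev density theorem. First I would recall that, since $K/\Q$ is Galois, the Galois group acts transitively on the primes above a given rational prime $q$, so the pair $(\mathrm{e}(\lambda/q),\mathrm{f}(\lambda/q))$ does not depend on the choice of $\lambda \mid q$; in particular ``for some prime ideal $\lambda \mid q$'' may be read as ``for all''. Moreover, when $q$ is unramified in $K$ it has a well-defined Frobenius conjugacy class, and since $\Gal(K/\Q)$ is cyclic (hence abelian) of order $n$ this class is a single element $\Frob_q$. The point is the dictionary: for unramified $q$ the decomposition group at any $\lambda \mid q$ is generated by $\Frob_q$, so that $\mathrm{e}(\lambda/q)=1$ holds automatically while $\mathrm{f}(\lambda/q)$ equals the order of $\Frob_q$ in $\Gal(K/\Q)$. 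Hence, after discarding the finitely many primes that ramify in $K$ --- a set of Dirichlet density $0$ --- the set $S_r$ agrees up to a density-zero set with $\{q \in \mathbb{P} : q \text{ unramified in } K,\ \mathrm{ord}(\Frob_q)=r\}$.

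Next I would apply the Chebotarev density theorem in the form: for each $\sigma \in \Gal(K/\Q)$, the set of unramified $q$ with $\Frob_q=\sigma$ has Dirichlet density $1/n$. Identifying $\Gal(K/\Q)$ with $\Z/n\Z$, the elements of order exactly $r$ (for $r \mid n$) are precisely the generators of the unique subgroup of order $r$, of which there are $\varphi(r)$. Since the conditions $\Frob_q=\sigma$ for distinct $\sigma$ are mutually exclusive, summing the corresponding densities over these $\varphi(r)$ elements yields $\delta_D(S_r)=\varphi(r)/n$.

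I do not anticipate a genuine obstacle: the whole argument is the translation of the splitting data $(\mathrm{e},\mathrm{f})$ into the order of the Frobenius element, combined with the elementary fact that a cyclic group of order $n$ has exactly $\varphi(r)$ elements of each order $r \mid n$. The only minor point to check is that the ramified primes contribute nothing, which is clear since $K/\Q$ is ramified at only finitely many primes.
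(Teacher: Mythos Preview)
Your argument is correct and is precisely the standard proof: translate the condition $\mathrm{e}=1,\ \mathrm{f}=r$ into $\mathrm{ord}(\Frob_q)=r$ using that $K/\Q$ is abelian, count the $\varphi(r)$ elements of order $r$ in the cyclic group $\Gal(K/\Q)\simeq \Z/n\Z$, and apply Chebotarev. Note that the paper does not supply its own proof of this proposition; it is simply quoted from Marcus's textbook \cite{M18}, where the same Chebotarev-plus-counting argument is given (indeed this is essentially the only natural proof). So there is nothing to compare---your write-up matches the cited source.
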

     \begin{cor}
     \label{CorollarytoMar1}
      Let $f$ be as in Theorem~\ref{MT1}. 
      Then, there exists infinitely many primes $q \in \mathbb{P}$ which are inert in both $F, E_f$.
     \end{cor}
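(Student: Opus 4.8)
The plan is to reduce everything to one application of the Chebotarev density theorem in a suitable Galois extension of $\Q$. Set $\ell := [F:\Q] = [E_f:\Q]$. First I would record that $E_f/\Q$ is cyclic: it is Galois by Proposition~\ref{GS}, and of prime degree. Let $\tilde F$ denote the Galois closure of $F$ in $\bar{\Q}$, let $G := \Gal(\tilde F/\Q)$, and let $L := \tilde F\, E_f$; the natural action of $G$ on the $\ell$ embeddings $F \hookrightarrow \bar{\Q}$ (equivalently on the cosets $G/H$, where $H := \Gal(\tilde F/F)$) is faithful and transitive, realising $G$ as a transitive subgroup of $S_\ell$. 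In particular $\ell \mid |G|$, and $\ell^2 \nmid |G|$ because $\ell$ is prime. I would then invoke the usual splitting dictionary for unramified primes: for $q$ unramified in $L$, the prime $q$ is inert in $F$ precisely when $\Frob_q$ acts on the $\ell$ embeddings of $F$ as an $\ell$-cycle, and inert in $E_f$ precisely when $\Frob_q$ restricts nontrivially to $E_f$ --- both conditions depending only on the conjugacy class of $\Frob_q$ in $\Gal(L/\Q)$.

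The task thus becomes: produce a single $\tau \in \Gal(L/\Q)$ whose image in $G$ is an $\ell$-cycle and whose restriction to $E_f$ is nontrivial. Granting this, Chebotarev (cf.~\cite{S81}) applied to $L/\Q$ yields infinitely many primes $q$ with $\Frob_q$ in the conjugacy class of $\tau$, and by the dictionary above each such $q$ is inert in both $F$ and $E_f$. To build $\tau$, I would take $\sigma \in G$ of order $\ell$ (Cauchy's theorem), which is necessarily a single $\ell$-cycle in $S_\ell$, and lift it along the restriction surjection $\Gal(L/\Q) \twoheadrightarrow G$ to some $\tilde\sigma$. If $\tilde\sigma|_{E_f} \neq \mathrm{id}$, take $\tau = \tilde\sigma$. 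If $\tilde\sigma|_{E_f} = \mathrm{id}$ while $E_f \not\subseteq \tilde F$, then $E_f \cap \tilde F = \Q$ (as $E_f/\Q$ has prime degree), so restriction gives an isomorphism $\Gal(L/\tilde F) \xrightarrow{\sim} \Gal(E_f/\Q)$; choosing $\eta \in \Gal(L/\tilde F)$ with $\eta|_{E_f} \neq \mathrm{id}$ and setting $\tau := \tilde\sigma\eta$ works, since then $\tau$ restricts to $\sigma$ on $\tilde F$ and to $\eta|_{E_f}$ on $E_f$.

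The remaining case $E_f \subseteq \tilde F$ is the one genuine point, and the step I would single out as the main obstacle: here $L = \tilde F$, and a priori every $\ell$-cycle of $G$ could lie in the index-$\ell$ normal subgroup $K := \Gal(\tilde F/E_f)$ and hence restrict trivially to $E_f$. A Sylow count rules this out: $|K| = |G|/\ell$ is coprime to $\ell$ (because $\ell^2 \nmid |G|$), so $K$ contains no element of order $\ell$; therefore $\sigma \notin K$, i.e.\ $\sigma|_{E_f} \neq \mathrm{id}$, and $\tau = \sigma$ does the job. Everything else --- Cauchy's theorem, the splitting dictionary, Chebotarev --- is routine. (As a consistency check, Proposition~\ref{Mar1} already gives that the primes inert in $E_f$ alone have positive density $\varphi(\ell)/\ell$; the only real content is arranging the two inertness conditions simultaneously, which is exactly what the construction of $\tau$ achieves.)
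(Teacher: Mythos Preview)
Your argument is correct. The paper gives no explicit proof, presenting the statement as an immediate corollary of Proposition~\ref{Mar1}; the tacit reasoning is that when both $F/\Q$ and $E_f/\Q$ are cyclic of odd prime degree $\ell$, the primes that are \emph{not} inert in $F$ (resp.\ in $E_f$) have density $1/\ell$ by Proposition~\ref{Mar1}, so the primes inert in both have density at least $1-2/\ell>0$. This relies on $F/\Q$ being Galois, an assumption the paper appears to make implicitly (cf.\ the assertion $G_{\hat F}=G_F$ in the proof of Proposition~\ref{image of the primitive forms}, which also requires it). Your route via Chebotarev in the compositum $L=\tilde F\,E_f$, together with the explicit construction of a Frobenius class $\tau$ that is simultaneously an $\ell$-cycle on the embeddings of $F$ and nontrivial on $E_f$, is more elaborate but also more general: it establishes the corollary even when $F/\Q$ is not Galois, where a direct appeal to Proposition~\ref{Mar1} for $F$ is unavailable. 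The Sylow count ruling out $\sigma\in\Gal(\tilde F/E_f)$ in the case $E_f\subseteq\tilde F$ is the one nonformal step, and your handling of it is clean.
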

     For  $q\in \mathbb{P}$, let $\lambda, \upsilon$ be prime ideals of $E_f, F$ lying above $q$, respectively. Let $I_\upsilon$ be the inertia group at $\upsilon$.	 We now give the proof of Proposition~\ref{image of the primitive forms}.

	\begin{proof}
	 	The proof of the proposition is similar to that of the technique in~\cite[Proposition 3.1]{DD06}. 
		In our case, $k_0=2$, $\Psi= \Psi_{\mrm{triv}}$ and hence $G_{\hat{F}}=G_F$. By Proposition~\ref{Dthm3}, for all primes $q \gg 1$,
		there exists a power $\hat{q}$ of $q$, we have either
		$ \bar{\rho}_{f, \lambda}(G_F) \simeq \big\{g \in \GL_2(\F_{\hat{q}}) : \det(g) \in \F_q^{\times}\big\} ,$ or $  \bar{\rho}_{f, \lambda}(G_F) \simeq \big \{g \in \F_{\hat{q}^2}^{\times}   \GL_2(\F_{\hat{q}}) : \det(g) \in \F_q^{\times}\big \}$. We now show that the later possibility will not occur.
		
		Suppose $ \bar{\rho}_{f, \lambda}(G_F) \simeq \{\gamma \in  \F_{\hat{q}^2}^\times\GL_2(\F_{\hat{q}}) : \det(\gamma)\in \F_q^{\times} \}$ for some prime power $\hat{q}$ of $q$ with 
		$q \gg 1$. 
		Now, we argue as in the proof of~\cite[Proposition 3.9]{D05}, we get that $\F_q \subseteq \F_{\hat{q}^2} \subseteq \F_\lambda$. However, this cannot happen because $d$ is  odd  and $2 |[\F_\lambda :\F_q]$. 
		Therefore, 
		$$\bar{\rho}_{f, \lambda}(G_F) \simeq \big\{g \in \GL_2(\F_{\hat{q}}) : \det(g) \in \F_q^{\times} \big\} $$	for $q \in \mathbb{P}$ with $q \gg 1$. By \cite[Proposition 1]{DD06}, the possible fundamental characters for $\bar{\rho}_{f, \lambda}|I_\upsilon$ are of level $d$ or $2d$ if $\mathrm{f}(\upsilon|q)=d$. 
		Hence, we have $\F_{q^d} \subseteq \F_{\hat{q}} \subseteq \F_\lambda$. Now, choose a prime 
		$q \in \mathbb{P}$ which is inert in both $F$ and $E_f$. By Corollary~\ref{CorollarytoMar1}, there
	    exists infinitely many such primes. Since $\mathrm{f}(\lambda|q)=d$, the fundamental characters of level $2d$ cannot occur in $\bar{\rho}_{f, \lambda}|I_\upsilon$, therefore $\F_{q^{d}} = \F_{\hat{q}} =\F_\lambda$. Therefore, we have 
		$$ \bar{\rho}_{f, \lambda}(G_F) \simeq \big\{g \in \GL_2(\F_{q^d}) : \det(g) \in \F_q^{\times} \big\}$$
		for infinitely many $q \in \mathbb{P}$ with $\mathrm{f}(\lambda / q)= \mathrm{f}(\upsilon / q)=d$. 
		We are done with the proof.
	\end{proof}

	\subsection{A result for Hilbert modular forms of weight $k$:}
	The aim of this section is to prove an analogue of Theorem~\ref{MT1}
	for Hilbert modular forms of weight $k$. When $k$ is of parallel weight $2$ and $[F : \Q]=[E_f : \Q]$ is an odd prime, we show that the assumption in Theorem~\ref{MT2} is satisfied and hence we use it to prove Theorem~\ref{MT1}. 
	\begin{thm}
    \label{MT2}
    Let $f\in S_k(\mfn, \Psi)$ be a primitive form defined over $F$ of weight $k$, level $\mfn$ and character $\Psi$. For any subfield $\Q \subseteq L \subsetneq E_f$, assume that 
	\begin{equation} 
		\label{assumption in Theorem for E_f}
	  	\bar{\rho}_{f, \lambda}(G_{\hat{F}}) \supseteq 
		\{\gamma \in \GL_2(\F_{q^\mathrm{f}}) : \det(\gamma)\in (\F_q^{\times})^{k_0-1} \}\  \mathrm{with\ } 
		\mathrm{f} = \mathrm{f}(\lambda| q),		 
	\end{equation}
	    for infinitely many $\lambda \in \mathcal{P}_f$ with  $\mathrm{f}_{\lambda, E_f, L} >1$, where $q \in \mathbb{P}$ lying below $\lambda$. Then 
	    $$\delta_D \left( \big \{ \mfp \in P :\, C(\mfp, f)\in L \big \} \right) = 0.$$
	\end{thm}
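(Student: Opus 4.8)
The plan is to translate the condition $C(\mfp,f)\in L$ into a condition on the residual representations, to bound its frequency via Chebotarev separately for each $\lambda$ in the hypothesised family, and then to let $q$ go to infinity. Fix such a $\lambda$, let $q\in\mathbb{P}$ be the rational prime below it, write $\mathrm{f}=\mathrm{f}(\lambda|q)$ so that $\F_\lambda=\F_{q^{\mathrm f}}$, and put $\mathrm{f}'=\mathrm{f}(\lambda\cap L/q)$, so that $\F_{\lambda\cap L}=\F_{q^{\mathrm f'}}$ and $\mathrm{f}=\mathrm{f}_{\lambda,E_f,L}\cdot\mathrm{f}'$ with $\mathrm{f}_{\lambda,E_f,L}\ge 2$; in particular $\F_{\lambda\cap L}\subsetneq\F_\lambda$. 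After discarding the finitely many $\lambda$ at which some $C(\mfp,f)$ fails to be $\lambda$-integral, the identity \eqref{trace of Galois representation} reduced modulo $\lambda$ gives $\mrm{tr}(\bar\rho_{f,\lambda}(\Frob_\mfp))\equiv C(\mfp,f)\pmod\lambda$ for all primes $\mfp$ of $\mcO_F$ with $(\mfp,\mfn q)=1$, so $C(\mfp,f)\in L$ forces $\mrm{tr}(\bar\rho_{f,\lambda}(\Frob_\mfp))\in\F_{\lambda\cap L}$. Hence, up to a finite set of primes, $\{\mfp\in P:C(\mfp,f)\in L\}$ is contained in $\{\mfp\in P:\mrm{tr}(\bar\rho_{f,\lambda}(\Frob_\mfp))\in\F_{\lambda\cap L}\}$. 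Writing $G_\lambda:=\bar\rho_{f,\lambda}(G_F)$ and $\Sigma_\lambda:=\{g\in G_\lambda:\mrm{tr}(g)\in\F_{\lambda\cap L}\}$, the set $\Sigma_\lambda$ is a union of conjugacy classes of $G_\lambda$, so Theorem~\ref{Density of conjugacy classes} shows the latter set has density $|\Sigma_\lambda|/|G_\lambda|$.

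The heart of the matter is the estimate $|\Sigma_\lambda|/|G_\lambda|\le 4/q$. By \eqref{assumption in Theorem for E_f} together with $G_{\hat F}\subseteq G_F$, the group $G_\lambda$ satisfies $\SL_2(\F_{q^{\mathrm f}})\subseteq\{\gamma\in\GL_2(\F_{q^{\mathrm f}}):\det\gamma\in(\F_q^\times)^{k_0-1}\}\subseteq\bar\rho_{f,\lambda}(G_{\hat F})\subseteq G_\lambda\subseteq\GL_2(\F_{q^{\mathrm f}})$. Consequently $\ker(\det|_{G_\lambda})=\SL_2(\F_{q^{\mathrm f}})$, so $|G_\lambda|=|\det(G_\lambda)|\cdot|\SL_2(\F_{q^{\mathrm f}})|=|\det(G_\lambda)|\,(q^{3\mathrm f}-q^{\mathrm f})$. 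On the other hand, for each fixed $\delta\in\F_{q^{\mathrm f}}^\times$ and $t\in\F_{q^{\mathrm f}}$ the matrices in $\GL_2(\F_{q^{\mathrm f}})$ with characteristic polynomial $x^2-tx+\delta$ form a union of at most two conjugacy classes, of total cardinality at most $q^{2\mathrm f}+q^{\mathrm f}\le 2q^{2\mathrm f}$; summing over the at most $|\det(G_\lambda)|$ determinant values and the $q^{\mathrm f'}$ trace values available to elements of $\Sigma_\lambda$ yields $|\Sigma_\lambda|\le|\det(G_\lambda)|\cdot q^{\mathrm f'}\cdot 2q^{2\mathrm f}$. Dividing, $|\Sigma_\lambda|/|G_\lambda|\le 2q^{\mathrm f'+2\mathrm f}/(q^{3\mathrm f}-q^{\mathrm f})\le 4q^{\mathrm f'-\mathrm f}\le 4/q$, where the last inequality uses $\mathrm f-\mathrm f'=\mathrm f'(\mathrm f_{\lambda,E_f,L}-1)\ge 1$.

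Finally, since \eqref{assumption in Theorem for E_f} is assumed for infinitely many $\lambda$ and each rational prime lies below only finitely many primes of $E_f$, the primes $q$ arising above form an unbounded set; hence $\{\mfp\in P:C(\mfp,f)\in L\}$ has upper Dirichlet density $\le 4/q$ for arbitrarily large $q$, and therefore its Dirichlet density is $0$. The genuinely delicate step is the group-theoretic estimate above: one must establish the uniform bound $q^{2\mathrm f}+q^{\mathrm f}$ on the number of matrices with a prescribed characteristic polynomial, checking the split, inert and scalar/unipotent cases, and it is precisely the inclusion $\SL_2(\F_{q^{\mathrm f}})\subseteq G_\lambda$ furnished by \eqref{assumption in Theorem for E_f} that keeps $|G_\lambda|$ of size $\gg q^{3\mathrm f}$, which is what forces the ratio to decay in $q$; with no large-image input one could only bound the ratio by a constant.
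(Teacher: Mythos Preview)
Your proof is correct and follows essentially the same approach as the paper: reduce $C(\mfp,f)\in L$ to a trace condition modulo $\lambda$, apply Chebotarev (Theorem~\ref{Density of conjugacy classes}) to the image $G_\lambda$, bound the number of elements with trace in the subfield by counting characteristic polynomials, use the large-image hypothesis to bound $|G_\lambda|$ from below, and let $q\to\infty$.

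The only notable difference is bookkeeping. The paper introduces $R=(\F_q^\times)^{k_0-1}$, $W=\mathrm{im}(\Psi^\ast\bmod\lambda)$, and $\tilde R=\langle R,W\rangle$ to control $\det(G_\lambda)$, and then invokes Proposition~\ref{conjugacy classes} (from \cite{KSW08}) for the per-polynomial bound. You instead observe directly that $\ker(\det|_{G_\lambda})=\SL_2(\F_{q^{\mathrm f}})$, so $|G_\lambda|=|\det(G_\lambda)|\cdot|\SL_2(\F_{q^{\mathrm f}})|$, and you bound $|\Sigma_\lambda|$ by $|\det(G_\lambda)|\cdot q^{\mathrm f'}\cdot 2q^{2\mathrm f}$; the $|\det(G_\lambda)|$ factors then cancel cleanly, and the explicit conjugacy-class count replaces the appeal to Proposition~\ref{conjugacy classes}. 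This is a mild streamlining of the same argument, yielding $O(1/q)$ rather than the paper's $O(1/q^2)$, but either suffices. (One incidental remark: since the $C(\mfp,f)$ are algebraic integers, the caveat about $\lambda$-integrality is unnecessary.)
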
 
	    
	The following proposition of Koo et al., (cf.~\cite[Proposition 2.1(c)]{KSW08}) is helpful in the proof of Theorem~\ref{MT2}.
 
	\begin{prop}
		\label{conjugacy classes}
		Let $R\subseteq \tilde{R}$ be two subgroups of $\F_{q^r}^\times$ for some $q\in \mathbb{P}$ and $r \in \N$. 
		Let $G\subseteq \{g \in \GL_2(\F_{q^r}) : \det(g) \in \tilde{R} \} \leq \GL_2(\F_{q^r})$. Let $P(x)=x^2-ax+b  \in \F_{q^r}[x]$. Then $\sum_{C}|C|\leq  2 |\tilde{R}/R|(q^2+q)$, where the sum carries over all the conjugacy classes $C$ of $G$ with characteristic polynomial equals to $P(x)$.
	\end{prop}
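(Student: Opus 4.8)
The plan is to read $\sum_C |C|$ as a plain cardinality and then reduce to an elementary matrix count. Since the conjugacy classes of $G$ partition $G$, the sum $\sum_C|C|$ over the classes $C$ with characteristic polynomial $P$ equals the size of the set $M(P) := \{g\in G : \det(xI-g)=P(x)\}$. Every $g\in M(P)$ has $\det(g)=b$ and $\mathrm{tr}(g)=a$; since $G\subseteq\{g\in\GL_2(\F_{q^r}) : \det(g)\in\tilde R\}$, if $b\notin\tilde R$ then $M(P)=\emptyset$ and the inequality is trivial. So I would assume $b\in\tilde R$ and bound $|M(P)|$ from above by the count in the ambient group, $|M(P)|\le \#\{g\in\GL_2(\F_{q^r}) : \mathrm{tr}(g)=a,\ \det(g)=b\}$.

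To evaluate this ambient count I would parametrize $g=\psmat{w}{x}{y}{z}$ and use $w+z=a$, $wz-xy=b$ to eliminate $z=a-w$, so that the remaining equation becomes $xy=-P(w)$. For each of the $q^r$ values of $w$ there are $q^r-1$ pairs $(x,y)$ when $P(w)\neq0$ and $2q^r-1$ pairs when $P(w)=0$; as $P$ has at most two roots in $\F_{q^r}$, summing over $w$ gives $\#\{g : \mathrm{tr}(g)=a,\ \det(g)=b\}\le (q^r)^2+q^r$, with equality exactly when $P$ splits into distinct linear factors. Equivalently, one may run the standard case analysis by centralizer orders: the split regular semisimple class has size $(q^r)^2+q^r$, the non-split one has size $(q^r)^2-q^r$, and a repeated root contributes a central class of size $1$ together with a unipotent-type class of size $(q^r)^2-1$; in every factorization type the total is at most $(q^r)^2+q^r$.

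Combining the two paragraphs with $|\tilde R/R|\ge1$ yields
\[
\sum_{C}|C| \;=\; |M(P)| \;\le\; (q^r)^2+q^r \;\le\; 2\,|\tilde R/R|\big((q^r)^2+q^r\big),
\]
which is the asserted estimate, the cardinality $q^r=|\F_{q^r}|$ playing the role written as $q$ in the conjugacy-class bound. Here the determinant hypothesis $\det(G)\subseteq\tilde R$ is exactly what couples the bound to $\tilde R$, while $R$ enters only through the harmless factor $|\tilde R/R|$; that factor is the slack one needs in the finer bookkeeping that tracks how a single $\GL_2(\F_{q^r})$-conjugacy class can distribute among the determinant cosets of $R$ in $\tilde R$ once $G$ is merely sandwiched, rather than equal to, the determinant-constrained groups.

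I would expect no essential difficulty: the computation is self-contained once the characteristic polynomial is seen to pin down both trace and determinant. The only points deserving care are the repeated-root case, where two conjugacy classes must be summed, and the verification that the split case dominates, so that the single clean bound $(q^r)^2+q^r$ is valid uniformly in the factorization type of $P$.
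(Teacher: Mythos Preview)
Your argument is correct. The paper does not prove this proposition; it quotes it from \cite[Proposition~2.1(c)]{KSW08} and uses it as a black box, so there is no in-paper proof to compare against. Your route---identifying $\sum_C|C|$ with the cardinality of $\{g\in G:\det(xI-g)=P(x)\}$ and then bounding that by the ambient count in $\GL_2(\F_{q^r})$---is the natural elementary one, and your case analysis (split, non-split, repeated root) giving the uniform maximum $(q^r)^2+q^r$ is accurate. You are also right that the factor $2|\tilde R/R|$ is pure slack in the inequality as stated: once one passes to the ambient $\GL_2$-count, the subgroup $R$ plays no further role, and the hypothesis $\det(G)\subseteq\tilde R$ is used only to dispose of the trivial case $b\notin\tilde R$.

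Your reading that the symbol $q$ in the bound must stand for the field cardinality $q^r$ is the only one under which the statement is true: for $G=\SL_2(\F_{q^r})$ with $r\ge 2$ and $P$ split with distinct roots, the single conjugacy class already has size $q^r(q^r+1)>2(q^2+q)$. This matches the original formulation in \cite{KSW08}, which is stated over a finite field of size $q$; the present paper's restatement introduces the two separate parameters $q$ (a prime) and $r$, creating the ambiguity you flagged.
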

 
	We now start the proof of Theorem~\ref{MT2}. 
	\begin{proof} 
		Let	$\mcO_{E_f}, \mcO_L $ denote the ring of integers of $E_f, L$, respectively. Let $T$ be the set of all prime ideals $\lambda$ of $\mcO_{E_f}$ such that~\eqref{assumption in Theorem for E_f} holds. By assumption, $T$ is an infinite set. For any $Q  \in T$, let $Q_L, q$ be the prime ideals of $\mcO_{L}, \Z$ lying below $Q$, respectively.  
		Let $\F_{q^r}=\mcO_{L}/Q_L,\ \F_{q^{rm}}=\mcO_{E_f}/Q$ for some $r\geq 1, m\geq2$.
		
		Let $R:=(\F_q^{\times})^{k_0-1}$, $W \leq \F_{q^{rm}}^\times$ denote the image of $\Psi^\ast$ mod $Q$ and $\tilde{R}:= \langle R,W \rangle$, the subgroup of $\F_{q^{rm}}^\times$ generated by $R$ and $W$. Then $|R| \leq q-1, \;  |W|\leq |(\mcO_E / \mfn)^\times|$ and $ |\tilde{R}|\leq |R||W|$. 
		
		Let $G:= \Bar{\rho}_{f,Q}(G_F)$ be the image of $Q$-adic residual Galois representation $\Bar{\rho}_{f, Q}$. By \eqref{trace of Galois representation} and \eqref{residual Galois repres}, $G$ is a subgroup 
		of $\{g \in \GL_2(\F_{q^{rm}}) : \text{det}(g) \in\tilde{R} \}$.
		
		Let $M_Q:=\bigsqcup_C \{ \mfp\in P : [\Bar{\rho}_{f, Q}(\Frob_\mfp)]_G=C\}$, where $C$ carries over all the conjugacy classes of $G$ with characteristic polynomial $x^2-ax+b \in \F_{q^{rm}}[x]$ such that $ a \in \F_{q^r}$ and $ b\in \tilde{R}$. Then, there are at most $q^r|R||W|$ such polynomials. By~\eqref{trace of Galois representation}, we have $a \equiv C(\mfp,f) \pmod Q$. Since $ a \in \F_{q^r}$, we get $C(\mfp,f) \pmod Q \in \F_{q^r} $. 
		Hence,
		\begin{equation}
			\label{new Q with the original set} 
			M_Q \supseteq \{\mfp\in P \, :\, C(\mfp, f)\pmod{Q} \in \F_{q^r} \}.
		\end{equation} 
		By Theorem~\ref{Density of conjugacy classes}, we have
		$\delta_D (M_Q)=\sum_{C}\frac{|C|}{|G|}$. Now, by Proposition~\ref{conjugacy classes}, we get 
		\begin{equation}
			\label{new 6.1} 
			\delta_D (M_Q)\leq \frac{q^r|R||W|\times 2 |\tilde{R}/R|(q^2+q)}{|G|} = \frac{2|R||W|^2q^r(q^2+q)}{|G|}.
		\end{equation}
        Since $G_F \supseteq G_{\hat{F}}$, by~\eqref{assumption in Theorem for E_f}, we get 
		\begin{equation} \label{new 6.2}
			|G|\geq |R|\times  |\SL_2(\F_{q^{rm}})|,
		\end{equation}
		and this provides a lower bound to $|G|$. Combining~\eqref{new 6.2} with~\eqref{new 6.1}, we get 
		$$ \delta_D (M_Q)\leq \frac{2|W|^2|R| q^r(q^2+q)}{|R|\times  |\SL_2(\F_{q^{rm}})|}= \frac{2|W|^2{q^{r+3}}}{q^{3rm}(q-1)}.$$ Since $m\geq 2, r\geq 1$, we have $\delta_D (M_Q)\leq O\left(\frac{1}{q^2}\right)$. By hypothesis, $T$ is an infinite set and hence $q$ is unbounded.
		The inclusion of the sets in~\eqref{new Q with the original set} implies
		$\big \{ \mfp \in P :\, C (\mfp, f)\in L \big \} \subseteq \bigcap_{Q \in T} M_Q
		$. Therefore, we have
		
		\begin{equation}
		\label{conclusion of Theorem 4.1}
			\delta_D \left( \big \{ \mfp \in P :\, C(\mfp, f)\in L \big \} \right) = 0.
		\end{equation}
		This completes the proof of an auxiliary result, i.e., Theorem~\ref{MT2}.
\end{proof}
The above theorem holds even if the inclusion in~\eqref{assumption in Theorem for E_f} holds up to conjugation.
\begin{cor}
\label{Corollary to MT2}
        Let $f$ be as in Theorem~\ref{MT2} which satisfies \eqref{assumption in Theorem for E_f}, for any  subfield $\Q \subseteq L \subsetneq E_f$. Then
        $\delta_D \left( \big \{ \mfp\in P : \Q \big (C(\mfp,f) \big ) =E_f \big \} \right) = 1.$
\end{cor}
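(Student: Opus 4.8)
The plan is to deduce the Corollary from Theorem~\ref{MT2} by a finite inclusion-exclusion over the proper subfields of $E_f$. First I would recall that $E_f/\Q$ is a finite Galois extension (Proposition~\ref{GS}(1)), so it has only finitely many intermediate fields $\Q \subseteq L \subsetneq E_f$; call them $L_1, \dots, L_t$. The key observation is the set-theoretic identity
\begin{equation*}
\big\{\mfp \in P : \Q(C(\mfp,f)) \subsetneq E_f\big\} = \bigcup_{i=1}^{t} \big\{\mfp \in P : C(\mfp,f) \in L_i\big\},
\end{equation*}
since $\Q(C(\mfp,f))$ is a proper subfield of $E_f$ precisely when $C(\mfp,f)$ lies in \emph{some} maximal proper subfield, and a fortiori in some $L_i$ from the (finite) list of all proper subfields. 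Granting the hypothesis that $f$ satisfies~\eqref{assumption in Theorem for E_f} for every proper subfield $L$, Theorem~\ref{MT2} gives $\delta_D(\{\mfp \in P : C(\mfp,f) \in L_i\}) = 0$ for each $i$. A finite union of sets of Dirichlet density zero has Dirichlet density zero, hence the left-hand set has density $0$, and therefore its complement $\{\mfp \in P : \Q(C(\mfp,f)) = E_f\}$ has Dirichlet density $1$.

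I should be slightly careful about two points. First, Dirichlet density is only finitely additive (and in fact subadditive on the relevant sets), but that is all we need: a finite union of density-zero sets has density zero, and the density of the complement is $1$ minus the density of the set when the latter exists, which it does here (it is $0$). Second, one must confirm the hypothesis of Theorem~\ref{MT2} is being invoked correctly: the theorem requires~\eqref{assumption in Theorem for E_f} to hold for infinitely many $\lambda \in \mathcal{P}_f$ with $\mathrm{f}_{\lambda, E_f, L} > 1$ \emph{for the given} $L$; the statement of the Corollary says $f$ "satisfies~\eqref{assumption in Theorem for E_f} for any subfield $\Q \subseteq L \subsetneq E_f$," which I read as: for each such $L$, the hypothesis of Theorem~\ref{MT2} holds. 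Under that reading the deduction above is immediate. I would also remark, as the paper does after the proof of Theorem~\ref{MT2}, that it suffices to have the inclusion in~\eqref{assumption in Theorem for E_f} hold only up to conjugation in $\GL_2(\F_{q^{\mathrm{f}}})$, since conjugation does not affect the cardinality bounds $|G| \geq |R| \cdot |\SL_2(\F_{q^{rm}})|$ used in the proof.

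There is no serious obstacle here; the Corollary is a formal consequence of Theorem~\ref{MT2} together with the finiteness of the lattice of subfields of the Galois extension $E_f/\Q$. The only mild subtlety is making sure the quantifiers match between the Corollary's hypothesis and Theorem~\ref{MT2}'s hypothesis, and noting that one really needs the assumption for \emph{all} proper $L$ simultaneously (not just the maximal ones, though reducing to the maximal ones is also fine since $C(\mfp,f) \in L$ implies $C(\mfp,f)$ lies in any field containing $L$). If one wanted to be economical, one could run the union only over the maximal proper subfields $L_i$ and invoke Theorem~\ref{MT2} for those; this is the shortest route and is what I would write in the final version.
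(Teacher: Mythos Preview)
Your proposal is correct and follows essentially the same approach as the paper: both argue that $E_f/\Q$ has only finitely many proper subfields (you via Galois, the paper via finite separability), express $\{\mfp : \Q(C(\mfp,f)) \subsetneq E_f\}$ as a finite union of the sets $\{\mfp : C(\mfp,f) \in L_i\}$, apply Theorem~\ref{MT2} to each summand, and pass to the complement. Your additional remarks on subadditivity of Dirichlet density, the quantifier reading, and the reduction to maximal subfields are sound but not needed for the argument.
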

\begin{proof}
         Let $\mfp\in P$ be a prime with $ \Q \big (C(\mfp,f) \big ) \subsetneq E_f $.Then $C(\mfp,f) \in L$ for some proper subfield $L$ of $E_f$.
         Since $[E_f:\Q]$ is a finite separable extension, there are only finitely many subfields between $\Q$ and $E_f$, and by~Theorem~\ref{MT2}, we have that $\delta_D \left( \big \{ \mfp\in P : \Q \big (C(\mfp,f) \big ) \subsetneq E_f \big \} \right) =0.$ This completes the proof of the corollary.
\end {proof}

We have some remarks to make.
 \begin{itemize}
  \item The conclusion of Theorem~\ref{MT2} implies that $f$ has to be a non-CM form; For a CM form, the density of $\mfp \in P$ for which $C(\mfp,f)=0$ has density $1/2$.		
  \item The equation~\eqref{assumption in Theorem for E_f} of Theorem~\ref{MT2} implies that $E_f \neq \Q$. However, if $E_f = \Q$ then $C(\mfp, f) \in \Q$ for all $\mfp \in P$. The density of $\{ \mfp \in P :\, \Q (C(\mfp, f))=\Q \big \}$ is $1$ if $f$ is non-CM, is $1/2$ if $f$ is CM. 
 \end{itemize}

\subsection{The proof of Theorem~\ref{MT1} with supporting examples}
        In this section, we give a proof of Theorem ~\ref{MT1} and provide some examples of $f$ in support of it.
\begin{proof}[Proof of Theorem~\ref{MT1}]
	    Since $[E_f : \Q]$ is an odd prime, there only proper subfield $L$ of $E_f$ is $\Q$. By Proposition~\ref{image of the primitive forms}, $f$ satisfies the assumption~\eqref{assumption in Theorem for E_f} of Theorem~\ref{MT2}. Hence, by Corollary~\ref{Corollary to MT2}, the proof of Theorem~\ref{MT1} follows. 
\end{proof}
   We now give some examples of primitive Hilbert modular forms $f$
   in support of Theorem~\ref{MT1}.

\begin{example}
\label{example1}
        Consider a non-CM primitive form $f$ defined over $F=\Q(\zeta_7)^+$ with generator $\omega$ having minimal polynomial $x^3-x^2-2x+1$, with weight $(2, 2,2)$, level $[167,\; 167,\; \omega^2+\omega-8]$ and with trivial character. This Hilbert modular form $f$ is labelled as $\texttt{3.3.49.1-167.1-a}$ in \cite{lmfdb}. The coefficient field $E_f$ of $f$ is $\Q(\alpha)$, where $\alpha$ is a root of the irreducible polynomial $x^3-x^2-4x-1 \in \Q[x]$.
        
\end{example}
	
	\begin{example}
		\label{example2}
		Consider a non-CM primitive form $f$ defined over  $F=\Q(\zeta_9)^+$ with generator $\omega$ having minimal polynomial $x^3-3x-1$, with weight $(2, 2,2)$, level $[71,\; 71,\; \omega^2+\omega-7]$ and with trivial character. This Hilbert modular form $f$ is labelled as  $\texttt{3.3.81.1-71.1-a}$ in \cite{lmfdb}. The coefficient field $E_f$ of $f$ is $\Q(\beta)$, where $\beta$ is a root of the irreducible polynomial $x^3-x^2-4x+3 \in \Q[x]$. 

	\end{example}
	
	\begin{example}
		\label{example3}
		Consider a non-CM primitive form $f$ defined over  $F=\Q(\zeta_7)^+$ with generator $\omega$ having minimal polynomial $x^3-x^2-2x+1$, with weight $(2, 2,2)$, level $[239,\; 239,\; 6\omega^2-5\omega-7]$ and with trivial character. This Hilbert modular form $f$ is labelled as $\texttt{3.3.49.1-239.1-a}$ in \cite{lmfdb}. The coefficient field $E_f$ of $f$ is $\Q(\theta)$, where $\theta$ is a root of the irreducible polynomial $x^3-12x-8 \in \Q[x]$.
	\end{example}

     The  primitive modular forms $f$ in Example~\ref{example1}, Example~\ref{example2}, and Example~\ref{example3} are of parallel weight $2$ with $[F : \Q]=[E_f:\Q]=3$ and hence they satisfies the hypothesis of Theorem~\ref{MT1}. Moreover, $E_f$ is totally real and hence by Lemma~\ref{E_f=F_f}, these primitive forms $f$ do not have any non-trivial inner twists.

     \subsection{Computation of some Dirichlet density for $F_f$}
		In this section, we shall state and prove a variant of Theorem~\ref{MT2} and Corollary~\ref{Corollary to MT2} for $F_f$. 
		In fact, we compute the Dirichlet density of the set $\big \{ \mfp \in P :\, \Q\left(C^\ast(\mfp, f)\right)=F_f \big \}.$
		
		\begin{thm}
			\label{MT3}
			Let $f\in S_k(\mfn, \Psi)$ be a primitive form defined over $F$ of weight $k$, level $\mfn$ and character $\Psi$. For any  subfield $\Q \subseteq L \subsetneq F_f$, assume that 
			\begin{equation} 
				\label{assumption in Theorem for F_f}
				\bar{\rho}_{f, \lambda}(G_{\hat{F}})  \supseteq 
				\{\gamma \in \GL_2(\F_{q^{\mathrm{f}}}) : \det(\gamma)\in (\F_q^{\times})^{k_0-1} \}\  \mathrm{with\ } 
				\mathrm{f} = \mathrm{f}_{\lambda, F_f, \Q},
			\end{equation}
			for infinitely many $\lambda \in \mathcal{P}_f$ with $\mathrm{f}_{\lambda, F_f, L} >1$, where 
			$q \in \mathbb{P}$ is the rational prime lying below $\lambda$. Then
			$$\delta_D \left( \big \{ \mfp \in P :\, C^\ast(\mfp, f)\in L \big \} \right) = 0.$$
	     	\end{thm}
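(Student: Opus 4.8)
The plan is to mirror the proof of Theorem~\ref{MT2}, replacing the Fourier coefficients $C(\mfp,f)$ by the normalized quantities $C^\ast(\mfp,f)$ and the field $E_f$ by $F_f$. First I would fix an infinite set $T$ of primes $\lambda \in \mathcal{P}_f$ for which the hypothesis \eqref{assumption in Theorem for F_f} holds, and for each $Q \in T$ set $Q_{F_f} := Q \cap \mcO_{F_f}$, $Q_L := Q \cap \mcO_L$, and $q$ the rational prime below $Q$. Writing $\F_{q^s} = \mcO_{F_f}/Q_{F_f}$, $\F_{q^{sm}} = \mcO_{E_f}/Q$, and $\F_{q^t}=\mcO_L/Q_L$ with $t\mid s$ and $s\mid sm$, the condition $\mathrm{f}_{\lambda,F_f,L}>1$ says precisely that $t<s$. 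The key arithmetic observation is that, by definition, $C^\ast(\mfp,f) = C(\mfp,f)^2/\Psi^\ast(\mfp)$, and by Proposition~\ref{GS}(5) together with Lemma~\ref{main lemma}, $C^\ast(\mfp,f)=C(\mfp,f)\overline{C(\mfp,f)} \in F_f$ for $(\mfp,\mfn)=1$; so the trace $a$ and determinant $b$ of $\bar{\rho}_{f,Q}(\Frob_\mfp)$ are governed as before, with $a \equiv C(\mfp,f) \pmod Q$ and $b \equiv \Psi^\ast(\mfp)N(\mfp)^{k_0-1} \pmod Q$, and the reduction of $C^\ast(\mfp,f)$ equals $a^2/b'$ where $b'$ is the reduction of $\Psi^\ast(\mfp)$.

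Next I would set $R := (\F_q^\times)^{k_0-1}$, let $W \le \F_{q^{sm}}^\times$ be the image of $\Psi^\ast \bmod Q$, and $\tilde R := \langle R, W\rangle$, exactly as in the proof of Theorem~\ref{MT2}, so that $G := \bar{\rho}_{f,Q}(G_F)$ lies inside $\{g \in \GL_2(\F_{q^{sm}}) : \det(g)\in\tilde R\}$ and $|G|\ge |R|\,|\SL_2(\F_{q^{sm}})|$ by \eqref{assumption in Theorem for F_f} and $G_F\supseteq G_{\hat F}$. The crucial set to bound is
\[
  M_Q := \bigsqcup_C \{\mfp\in P : [\bar{\rho}_{f,Q}(\Frob_\mfp)]_G = C\},
\]
where $C$ runs over conjugacy classes of $G$ with characteristic polynomial $x^2-ax+b$ such that the reduction of $C^\ast(\mfp,f)=a^2/b'$ lies in $\F_{q^t}$. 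Since $b' \in W$ takes at most $|W|$ values and, for each fixed $b'$, the constraint $a^2/b' \in \F_{q^t}$ forces $a^2$ into a set of size at most $q^t$, hence $a$ into a set of size at most $2q^t$, the number of admissible polynomials $x^2-ax+b$ (recall also $b\in\tilde R$, at most $|R||W|$ choices) is $O(q^t |R|\,|W|^2)$. Applying Proposition~\ref{conjugacy classes} with this $R\subseteq\tilde R$ and $G$, each such polynomial contributes $\sum_C |C| \le 2|\tilde R/R|(q^2+q) \le 2|W|(q^2+q)$, so by Theorem~\ref{Density of conjugacy classes},
\[
  \delta_D(M_Q) \;\le\; \frac{O(q^t |R|\,|W|^2)\cdot 2|W|(q^2+q)}{|R|\,|\SL_2(\F_{q^{sm}})|}
  \;=\; O\!\left(\frac{|W|^3\, q^{t+3}}{q^{3sm}(q-1)}\right).
\]
Since $m\ge 2$, $s\ge 1$ and $t<s$ (so $t \le s-1 \le sm-2$), the exponent of $q$ in the numerator is at most $3s m - 2 + 3 = 3sm+1$ offset against $q^{3sm+1}$ in the denominator — more carefully, $t+3 \le s+2 \le sm$ while the denominator grows like $q^{3sm+1}$, giving $\delta_D(M_Q) = O(1/q^{2})$. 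As $T$ is infinite, $q$ is unbounded along $T$, and since $\{\mfp\in P : C^\ast(\mfp,f)\in L\} \subseteq \bigcap_{Q\in T} M_Q$ (reduction mod $Q$ maps $L$ into $\F_{q^t}$), we conclude $\delta_D(\{\mfp\in P : C^\ast(\mfp,f)\in L\}) = 0$.

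The main obstacle, and the one place where this differs genuinely from Theorem~\ref{MT2}, is counting the admissible characteristic polynomials: there the condition was simply $a \in \F_{q^r}$, whereas here the condition is on the nonlinear combination $a^2/\Psi^\ast(\mfp)$, so one must carefully track how the determinant data (i.e.\ the value of $\Psi^\ast(\mfp) \bmod Q$, which ranges over $W$) interacts with the trace, and verify that the resulting count $O(q^t|R|\,|W|^2)$ is still small enough — in particular that the extra factor of $|W|$ (compared to the $C(\mfp,f)$ case) is harmless, which it is because $|W|$ is bounded independently of $q$ by $|(\mcO_{E_f}/\mfn)^\times|$. A secondary point to be careful about is the case distinction $E_f$ totally real versus CM in relating the reduction of $C^\ast(\mfp,f)$ to $a,b$; but by Lemma~\ref{main lemma} the identity $C^\ast(\mfp,f)=C(\mfp,f)\overline{C(\mfp,f)}$ holds uniformly, and its reduction mod $Q$ is $a\cdot\bar a$ where $\bar a$ is the image of $\overline{C(\mfp,f)}=\Psi^\ast(\mfp)^{-1}C(\mfp,f)$, i.e.\ $a^2 (b')^{-1}$, so no separate treatment is needed. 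Once these counting estimates are in place the argument closes exactly as in Theorem~\ref{MT2}, and the analogue of Corollary~\ref{Corollary to MT2} — density $1$ for $\{\mfp : \Q(C^\ast(\mfp,f))=F_f\}$ — follows from finiteness of the set of intermediate fields between $\Q$ and $F_f$.
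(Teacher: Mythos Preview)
Your overall plan is correct and coincides with the paper's: the proof runs parallel to that of Theorem~\ref{MT2}, and the only genuinely new ingredient is the count of admissible characteristic polynomials under the nonlinear constraint $a^2/b'\in\F_{q^t}$, which brings in one extra factor of $|W|$ (and a factor of $2$ from square roots), exactly as in the paper's bound $\delta_D(M_Q)\le \frac{4|W|^3 q^{r+3}}{q^{3rm}(q-1)}$. However, there are two bookkeeping errors in your execution that you should fix.

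First, the lower bound $|G|\ge |R|\,|\SL_2(\F_{q^{sm}})|$ is stronger than what hypothesis~\eqref{assumption in Theorem for F_f} provides. In your notation $sm=\mathrm{f}_{\lambda,E_f,\Q}$, but the hypothesis only says that $\bar\rho_{f,\lambda}(G_{\hat F})$ contains $\{\gamma\in\GL_2(\F_{q^{\mathrm f}}):\det\gamma\in(\F_q^\times)^{k_0-1}\}$ with $\mathrm f=\mathrm f_{\lambda,F_f,\Q}=s$; thus one only gets $|G|\ge |R|\,|\SL_2(\F_{q^{s}})|$. Second, your assertion ``$m\ge 2$'' is unjustified: your $m$ is the residue degree of $E_f$ over $F_f$, which may equal $1$. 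The quantity forced to be $\ge 2$ by the hypothesis $\mathrm f_{\lambda,F_f,L}>1$ is $s/t$, not $m$. With these two corrections the estimate reads
\[
\delta_D(M_Q)\;=\;O\!\left(\frac{|W|^3\,q^{t+3}}{q^{3s}(q-1)}\right),
\]
and since $t\mid s$ with $t<s$ forces $t\le s/2$ and $s\ge 2$, the exponent of $q$ is at most $s/2+2-3s=2-\tfrac{5s}{2}\le -3$, so $\delta_D(M_Q)=O(1/q^2)$ and the argument closes exactly as you describe. (The paper's notation takes $\F_{q^r}$, $\F_{q^{rm}}$, $\F_{q^{rms}}$ for the residue fields at $L$, $F_f$, $E_f$ respectively, so its ``$m\ge 2$'' is your $s/t\ge 2$, and its denominator $q^{3rm}$ is your $q^{3s}$.)
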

     	The above theorem holds even if the inclusion in~\eqref{assumption in Theorem for F_f} holds up to conjugation.

		    \begin{proof} 
			In this proof, we follow the notations as in Theorem~\ref{MT2}.
			Let	$\mcO_{F_f}$ be the ring of integers of $F_f$.
			For any $Q \in T$, let $Q_F$ be the prime ideal of $\mcO_{F_f}$ lying below $Q$.
			Let $\mcO_{L}/Q_L=\F_{q^{r}}, \mcO_{F_f}/Q_F= \F_{q^{rm}}$ and $ \mcO_{E_f}/ Q= \F_{q^{rms}}$, for some $r\geq 1,\ m\geq 2$ and $s\geq 1$. 
			Then $G\subseteq \{g \in \GL_2(\F_{q^{rms}}) : \det(g) \in\tilde{R} \}$. Now, arguing as in the proof of Theorem~\ref{MT2}, we get
			$\delta_D (M_Q)\leq 			\frac{4|W|^3{q^{r+3}}}{q^{3rm}(q-1)}.$
			Since $m\geq 2, r\geq 1$, we get $\delta_D (M_Q)\leq O\left(\frac{1}{q^2}\right)$.
			Therefore, we have
			\begin{equation*} 
			\delta_D \left(\{ \mfp \in P :\, C^\ast(\mfp, f)\in L \big \}\right)=0.
		\end{equation*}
	This completes the proof of  Theorem~\ref{MT3}. 
\end{proof}

%
%
%
   
		\begin{cor}
			\label{Corollary1 to MT3}
			Let $f$ be as in Theorem~\ref{MT3} which satisfies \eqref{assumption in Theorem for F_f}, for any  subfield $\Q \subseteq L \subsetneq F_f$. Then
			$$\delta_D \left( \big \{ \mfp\in P : \Q \big (C^\ast(\mfp,f) \big ) =F_f \big \} \right) = 1.$$
		\end{cor}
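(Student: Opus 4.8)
The plan is to deduce Corollary~\ref{Corollary1 to MT3} from Theorem~\ref{MT3} exactly as Corollary~\ref{Corollary to MT2} was deduced from Theorem~\ref{MT2}. First I would note that $C^\ast(\mfp, f) \in F_f$ for every prime ideal $\mfp$ of $\mcO_F$ with $(\mfp, \mfn) = 1$, directly from the definition $F_f = \Q(C^\ast(\mfb, f))$; the finitely many primes dividing $\mfn$ contribute $0$ to any Dirichlet density, so they may be discarded. Thus a prime $\mfp$ (coprime to $\mfn$) fails the desired property precisely when $\Q(C^\ast(\mfp, f)) \subsetneq F_f$, and for such $\mfp$ we may take $L := \Q(C^\ast(\mfp, f))$, a subfield with $\Q \subseteq L \subsetneq F_f$ and $C^\ast(\mfp, f) \in L$. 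Hence
$$\big\{\mfp \in P : \Q(C^\ast(\mfp, f)) \subsetneq F_f\big\} \subseteq \bigcup_{\Q \subseteq L \subsetneq F_f} \big\{\mfp \in P : C^\ast(\mfp, f) \in L\big\},$$
the union ranging over the proper subfields $L$ of $F_f$ containing $\Q$.

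The key point is that this union is finite: $F_f$ is a number field, hence a finite separable extension of $\Q$, so by the primitive element theorem there are only finitely many fields between $\Q$ and $F_f$. For each such $L$, the hypothesis of the corollary supplies assumption~\eqref{assumption in Theorem for F_f}, so Theorem~\ref{MT3} applies and gives $\delta_D\big(\{\mfp \in P : C^\ast(\mfp, f) \in L\}\big) = 0$. A finite union of sets of Dirichlet density zero has Dirichlet density zero, so $\delta_D\big(\{\mfp \in P : \Q(C^\ast(\mfp, f)) \subsetneq F_f\}\big) = 0$. Passing to the complement inside $P$ and recalling that $C^\ast(\mfp, f) \in F_f$ for almost all $\mfp$, we obtain $\delta_D\big(\{\mfp \in P : \Q(C^\ast(\mfp, f)) = F_f\}\big) = 1$, which is the assertion. (If $F_f = \Q$ the bad union is empty and the conclusion is immediate.)

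As for obstacles, there is essentially no new difficulty: all the analytic input — the Chebotarev equidistribution, the conjugacy-class count of Proposition~\ref{conjugacy classes}, and the sharp lower bound on $|G|$ coming from~\eqref{assumption in Theorem for F_f} — has already been absorbed into Theorem~\ref{MT3}. The only point requiring a small amount of care is that Theorem~\ref{MT3} presupposes~\eqref{assumption in Theorem for F_f} for the subfield $L$ in question; since the corollary assumes it for \emph{every} proper subfield $L$ of $F_f$, this is exactly what is needed, and no further argument is required.
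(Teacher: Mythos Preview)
Your proof is correct and follows essentially the same route as the paper's own proof: both argue that there are only finitely many intermediate fields $\Q\subseteq L\subsetneq F_f$ (since $F_f/\Q$ is finite separable), apply Theorem~\ref{MT3} to each, and conclude that the complementary set has density~$0$. Your version is slightly more detailed (explicitly noting $C^\ast(\mfp,f)\in F_f$ and handling $F_f=\Q$), but there is no substantive difference.
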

	 
		\begin{proof}
			Suppose $\mfp\in P$ is a prime such that $L=\Q \big (C^\ast(\mfp,f) \big ) \subsetneq F_f $ is a proper subfield of $F_f$. Since $[F_f:\Q]$ is a finite separable extension, there are only finitely many subfields between $\Q$ and $F_f$, and by Theorem~\ref{MT3}, we get
			$$\delta_D \left( \big \{ \mfp\in P : \Q \big (C^\ast(\mfp,f) \big ) \subsetneq F_f \big \} \right) =0.$$ 
			This completes the proof of the corollary.	
		\end{proof}
	
			\begin{cor}\label{Corollary2 to MT3}
				Let $f$ and $\bar{\rho}_{f, \lambda}$ be as in Theorem~\ref{MT3}. Then, we have
				$$ \delta_D \left( \big \{ \mfp\in P : F_f \subseteq \Q(C(\mfp,f)) \big \} \right) = 1.$$
			\end{cor}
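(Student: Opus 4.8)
\textbf{Proof proposal for Corollary~\ref{Corollary2 to MT3}.}
The plan is to deduce the statement directly by combining Lemma~\ref{main lemma} with Corollary~\ref{Corollary1 to MT3}, so essentially no new work is needed beyond assembling these two facts. First I would recall that, by Lemma~\ref{main lemma}, for every prime ideal $\mfp$ of $\mcO_F$ with $(\mfp,\mfn)=1$ one has the containment $C^\ast(\mfp,f)\in\Q(C(\mfp,f))$; in particular $\Q(C^\ast(\mfp,f))\subseteq\Q(C(\mfp,f))$. Next I would invoke Corollary~\ref{Corollary1 to MT3}, which gives $\delta_D\big(\{\mfp\in P:\Q(C^\ast(\mfp,f))=F_f\}\big)=1$.

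Now I would simply observe that these two facts force an inclusion of sets: if $\mfp$ satisfies $(\mfp,\mfn)=1$ and $\Q(C^\ast(\mfp,f))=F_f$, then
$$F_f=\Q(C^\ast(\mfp,f))\subseteq\Q(C(\mfp,f)),$$
so $\mfp$ lies in $\{\mfp\in P:F_f\subseteq\Q(C(\mfp,f))\}$. Hence
$$\{\mfp\in P:(\mfp,\mfn)=1,\ \Q(C^\ast(\mfp,f))=F_f\}\ \subseteq\ \{\mfp\in P:F_f\subseteq\Q(C(\mfp,f))\}.$$
Since the set on the left differs from a density-$1$ set only by the finitely many primes dividing $\mfn$, it still has Dirichlet density $1$; therefore the set on the right also has Dirichlet density $1$, which is the claim.

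The only point requiring a word of care — and it is not really an obstacle — is that $C^\ast(\mfp,f)$ is defined only for $\mfp$ coprime to $\mfn$, so one must note that removing the finite set of primes dividing $\mfn$ does not change any Dirichlet density. Beyond that, the argument is purely formal; all the analytic input has already been carried out in Theorem~\ref{MT3} and Corollary~\ref{Corollary1 to MT3}.
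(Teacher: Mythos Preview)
Your proposal is correct and follows exactly the paper's own argument: combine Lemma~\ref{main lemma} (giving $\Q(C^\ast(\mfp,f))\subseteq\Q(C(\mfp,f))$) with Corollary~\ref{Corollary1 to MT3} (giving density~$1$ for $\Q(C^\ast(\mfp,f))=F_f$). The only difference is that you spell out the harmless issue of the finitely many primes dividing $\mfn$, which the paper leaves implicit.
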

		
			\begin{proof}
				Suppose $\mfp \in P$ with $\Q(C^\ast(\mfp,f) )=F_f$. From Lemma ~\ref{main lemma}, we have $F_f=\Q(C^\ast(\mfp,f) ) \subseteq \Q(C(\mfp,f))$. Corollary~\ref{Corollary1 to MT3} implies the result.
			\end{proof}
		
		In Example~\ref{example1}, Example~\ref{example2}, and Example~\ref{example3}, we have $E_f$ and $F$ are of degree $3$ over $\Q$ and $E_f=F_f$. Since there are no proper subfields of $F_f$, by Proposition~\ref{image of the primitive forms}, we conclude that these examples satisfy the hypothesis \eqref{assumption in Theorem for F_f} of Theorem~\ref{MT3}.

	    \section{Computation of the Dirichlet density for subfields of $E_f$}
			\label{final section}
         In $\S\ref{MT1_Section}$, we have computed the Dirichlet density of $\mfp \in P$ such that $C(\mfp, f)$ (resp., $C^\ast(\mfp, f)$) generates $E_f$ (resp., $F_f$). In this section, for any subfield $K$ of $E_f$, we compute the Dirichlet density of the set $\big \{\mfp \in P :  \Q \big(C(\mfp,f)\big)= K \big \}$. It is quite surprising to see that this density depends on whether $K\supseteq F_f$ or not.
			
			We now calculate the density of $\mfp\in P$ such that 
			$C(\mfp,f)\in K$ when $F_f \nsubseteq K$. 
			The following lemma is an analog of~\cite[Corollary 1.3(a)]{KSW08} for classical modular forms. 
			
			\begin{lem}\label{K does not contains F_f} 
				Let $f$ be as in Theorem \ref{MT3}. Let $K \subseteq E_f$ be a subfield such that $F_f \nsubseteq K$. Then $ \delta_D \left( \big \{\mfp \in P :  C(\mfp,f) \in K \big \}\right)= \delta_D \left( \big \{\mfp \in P :  \Q \big(C(\mfp,f)\big)= K \big \}\right) =0.$
			\end{lem}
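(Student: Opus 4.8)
The plan is to reduce the statement to Corollary~\ref{Corollary2 to MT3} together with a clean set-theoretic containment. The key observation is that if $F_f \nsubseteq K$, then no Fourier coefficient lying in $K$ can generate a field containing $F_f$; and by Corollary~\ref{Corollary2 to MT3}, almost all primes $\mfp$ have $F_f \subseteq \Q(C(\mfp,f))$. So the set where $C(\mfp,f) \in K$ is forced to live inside a density-zero complement. Concretely, first I would note the trivial inclusion
$$ \big\{ \mfp \in P : C(\mfp,f) \in K \big\} \subseteq \big\{ \mfp \in P : F_f \nsubseteq \Q(C(\mfp,f)) \big\}, $$
which holds because $C(\mfp,f) \in K$ forces $\Q(C(\mfp,f)) \subseteq K$, and $F_f \subseteq \Q(C(\mfp,f)) \subseteq K$ would contradict $F_f \nsubseteq K$.

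Next I would invoke Corollary~\ref{Corollary2 to MT3}, which gives
$$ \delta_D\left( \big\{ \mfp \in P : F_f \subseteq \Q(C(\mfp,f)) \big\} \right) = 1, $$
hence the complementary set $\{ \mfp \in P : F_f \nsubseteq \Q(C(\mfp,f)) \}$ has Dirichlet density $0$. Combining this with the inclusion above, monotonicity of Dirichlet density (more precisely, the fact that a subset of a density-zero set, whose density exists, has density zero — or simply that the upper density is $0$) yields $\delta_D(\{\mfp \in P : C(\mfp,f) \in K\}) = 0$. Finally, since $\{ \mfp \in P : \Q(C(\mfp,f)) = K \} \subseteq \{ \mfp \in P : C(\mfp,f) \in K \}$, the same bound gives density $0$ for that set as well, which is the second assertion.

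The only subtlety worth spelling out — and the closest thing to an obstacle — is the bookkeeping about existence of the Dirichlet density: strictly speaking one should phrase the squeeze in terms of upper Dirichlet density (or observe that both sets are unions of Chebotarev-type sets and hence have well-defined densities, by the same reasoning as in the proofs of Theorem~\ref{MT2} and Theorem~\ref{MT3}). I would handle this by stating the bound as an upper density $0$, from which the actual density is $0$. Everything else is formal: no new Galois-image input is needed beyond what Corollary~\ref{Corollary2 to MT3} already supplies, so this lemma is essentially a corollary of the $F_f$-results of the previous section, packaged for use in Proposition~\ref{K contains F_f}.
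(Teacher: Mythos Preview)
Your proposal is correct and follows exactly the paper's approach: the paper's proof is precisely the inclusion $\{\mfp : C(\mfp,f)\in K\}\subseteq\{\mfp : F_f\nsubseteq \Q(C(\mfp,f))\}$ followed by an appeal to Corollary~\ref{Corollary2 to MT3}. Your write-up is simply more explicit (justifying the inclusion, handling the second set, and flagging the upper-density point), but no new idea is involved.
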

			
			\begin{proof}
				Since $F_f \nsubseteq K$, we get 
				$ \big \{\mfp \in P :  C(\mfp,f) \in K \big \} \subseteq \big \{ \mfp\in P : F_f \nsubseteq \Q(C(\mfp, f))\big \}$. 
				The proof now follows from Corollary~\ref{Corollary2 to MT3}.
			\end{proof}
			
			Let $\Gamma^\prime=\{\gamma_1,\dots,\gamma_r\} \leq \Gamma$ be a subgroup associated with all the inner twists of $f$ and let $\Psi_{\gamma_1},\dots, \Psi_{\gamma_r}$ be the corresponding Hecke characters. 
			Hence, the ideal Hecke characters $\Psi_{\gamma_1}^\ast,\dots, \Psi_{\gamma_r}^\ast$ can be thought of as characters on $G_F$. For each $i\in \{1, 2, \ldots, r\}$,
			define $H_{\gamma_i} :=\mrm{Ker}(\Psi_{\gamma_i}^\ast)$ and set $H^{\Gamma^\prime} :=\bigcap_{i=1}^{r} H_{\gamma_i} \leq G_F$. Let $K_{H^{\Gamma^\prime}}$ denote the fixed field of $H^{\Gamma^\prime}.$
			Hence, $F\subseteq K_{H^{\Gamma^\prime}} \subseteq \Bar{F}$.
			
			\begin{lem}
				\label{density of splits completely prime}
				Let $\Gamma^\prime, H^{\Gamma^\prime}$ and $ K_{H^{\Gamma^\prime}}$ be as above. Then $$ \{ \mfp \in P : \mfp \  \textrm{splits completely in}\ K_{H^{\Gamma^\prime}} \}= \{ \mfp \in P :  \Psi_\gamma^\ast(\mfp)=1,\ \forall \gamma \in \Gamma^\prime \big \}.$$
			\end{lem}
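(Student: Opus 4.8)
The plan is to identify both sets with the set of primes $\mfp$ of $F$ that are unramified in $K_{H^{\Gamma^\prime}}$ and whose Frobenius lies in $H^{\Gamma^\prime}$, by invoking the standard dictionary from class field theory between the ideal Hecke characters $\Psi_{\gamma_i}^\ast$ of $F$ and the finite-order characters of $G_F$ they correspond to.

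First I would record the relevant field-theoretic setup. For each $i \in \{1, \ldots, r\}$, the character $\Psi_{\gamma_i}^\ast$ of $G_F$ has open normal kernel $H_{\gamma_i} = \mrm{Ker}(\Psi_{\gamma_i}^\ast)$, and the fixed field $K_{H_{\gamma_i}}$ is a finite abelian extension of $F$ that is unramified at precisely the primes coprime to the conductor $\mfm_i$ of $\Psi_{\gamma_i}$, with $\Psi_{\gamma_i}^\ast$ inducing an isomorphism of $\Gal(K_{H_{\gamma_i}}/F)$ onto the image of $\Psi_{\gamma_i}^\ast$. Since $H^{\Gamma^\prime} = \bigcap_{i=1}^{r} H_{\gamma_i}$ is again open and normal in $G_F$, the field $K_{H^{\Gamma^\prime}}$ is the compositum $K_{H_{\gamma_1}} \cdots K_{H_{\gamma_r}}$; it is a finite abelian extension of $F$, unramified outside $\mfm := \prod_{i} \mfm_i$, and $\Gal(K_{H^{\Gamma^\prime}}/F) \cong G_F/H^{\Gamma^\prime}$.

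Next I would run the case analysis on a prime $\mfp \in P$. If $\mfp \mid \mfm$, then $\mfp$ ramifies in $K_{H^{\Gamma^\prime}}$ and so does not split completely there; moreover, with the usual convention that an ideal character vanishes on ideals sharing a factor with its conductor, $\Psi_{\gamma_i}^\ast(\mfp) = 0 \neq 1$ for the index $i$ with $\mfp \mid \mfm_i$. Hence such $\mfp$ belongs to neither set. If instead $\mfp \nmid \mfm$, then $\mfp$ is unramified in $K_{H^{\Gamma^\prime}}$, the Frobenius $\Frob_\mfp$ is a well-defined element of $\Gal(K_{H^{\Gamma^\prime}}/F) = G_F/H^{\Gamma^\prime}$, and class field theory yields $\Psi_{\gamma_i}^\ast(\mfp) = \Psi_{\gamma_i}^\ast(\Frob_\mfp)$ for every $i$ (viewing $\Psi_{\gamma_i}^\ast$ as a character of $G_F$ as above, under the chosen normalization of Frobenius, which is irrelevant to whether this value is $1$). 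Then $\mfp$ splits completely in $K_{H^{\Gamma^\prime}}$ if and only if $\Frob_\mfp$ is trivial in $G_F/H^{\Gamma^\prime}$, if and only if $\Frob_\mfp \in H^{\Gamma^\prime} = \bigcap_i \mrm{Ker}(\Psi_{\gamma_i}^\ast)$, if and only if $\Psi_{\gamma_i}^\ast(\Frob_\mfp) = 1$ for all $i$, if and only if $\Psi_\gamma^\ast(\mfp) = 1$ for all $\gamma \in \Gamma^\prime$. Combining the two cases gives the claimed equality of sets.

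The step I expect to demand the most care is the handling of the ramified primes: one must fix the convention under which the condition ``$\Psi_\gamma^\ast(\mfp) = 1$ for all $\gamma \in \Gamma^\prime$'' already forces $\mfp \nmid \mfm$, so that it correctly excludes exactly the primes that cannot split completely in $K_{H^{\Gamma^\prime}}$; with this convention in place the rest is a direct application of class field theory. A secondary point worth recording is that the inner-twist characters $\Psi_{\gamma_i}$ are of finite order, so that $K_{H^{\Gamma^\prime}}$ is genuinely a finite extension of $F$; even without this observation, the Frobenius argument above applies verbatim to the (a priori possibly infinite) extension $K_{H^{\Gamma^\prime}}/F$.
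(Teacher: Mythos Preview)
Your argument is correct and is, at bottom, the same class-field-theory/Frobenius computation the paper uses: a prime $\mfp$ splits completely in $K_{H^{\Gamma'}}$ iff $\Frob_\mfp$ is trivial in $G_F/H^{\Gamma'}$, iff $\Frob_\mfp \in \bigcap_i \mrm{Ker}(\Psi_{\gamma_i}^\ast)$, iff $\Psi_{\gamma_i}^\ast(\mfp)=1$ for all $i$. The paper's version compresses this into a chain of set equalities passing through the explicit factorisation $\mfp=\mfp_1\cdots\mfp_m$ in $K_{H^{\Gamma'}}$, while you instead work directly with the Frobenius element; you also treat the ramified primes $\mfp\mid\mfm$ separately, which the paper omits. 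These are presentational differences rather than a genuinely different route.
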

			\begin{proof}
				Let $m= [K_{H^{\Gamma^\prime}}: F]$.
				\begin{align*}
					& \{ \mfp \in P : \ \mfp\ \text{splits completely in}\  K_{H^{\Gamma^\prime}}\}\\ 
					& = \{ \mfp \in P :\ \mfp= \mfp_1 \mfp_2 \dots \mfp_m  \text{ for prime ideals } \mfp_j \ \text{in} \ K_{H^{\Gamma^\prime}} \ \text{with} \ 1\leq j \leq m\} \\
					&=\{ \mfp \in P : \ \Psi_{\gamma_i}^\ast(\mfp_j)=1\ \forall i \in \{1,2, \dots, r\},\ \forall j\in \{1,2, \dots, m\} \}\\
					& =\{ \mfp \in P : \ \Psi_{\gamma_i}^\ast(\mfp)=1\ \forall i\in \{1,2, \dots, r\}\}.
				\end{align*}
				This completes the proof of the lemma.
			\end{proof}
			We are now in a position to compute the density of the set
			$\big \{\mfp \in P:  C(\mfp,f) \in K  \big \}$ if $F_f \subseteq K$. The following proposition generalizes~\cite[Corollary 1.3(b)]{KSW08} to primitive Hilbert modular forms. 
			
			\begin{prop} \label{K contains F_f}
				Let $f\in S_k(\mfn, \Psi)$ be a non-CM primitive form  defined over $F$ of weight $k$ and character $\Psi$. For any subfield $K$ with $F_f \subseteq K \subseteq E_f$, there exists a subgroup $\Gamma'$ of $\Gamma$ such that $K=E_f^{\Gamma'}$ and 
				$ \delta_D \left( \big \{ \mfp \in P   :  C(\mfp,f) \in K  \big \}\right) =\frac{1}{[K_{H^{\Gamma'}}: F]}$.
			\end{prop}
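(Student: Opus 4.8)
The plan is to reduce the membership $C(\mfp,f)\in K$ to a splitting condition in a fixed abelian extension of $F$ and then invoke the Chebotarev density theorem. First, recall from \S\ref{inner twists} that $E_f/F_f$ is a finite Galois extension with $\Gal(E_f/F_f)=\Gamma$; since $F_f\subseteq K\subseteq E_f$, the Galois correspondence produces the subgroup $\Gamma':=\Gal(E_f/K)\leq\Gamma$ with $E_f^{\Gamma'}=K$, which settles the first assertion. Writing $\Gamma'=\{\gamma_1,\dots,\gamma_r\}$, each $\gamma_i\in\Gamma$ is by definition attached to an inner twist of $f$, with associated Hecke character $\Psi_{\gamma_i}$, so the objects $H_{\gamma_i}=\mathrm{Ker}(\Psi_{\gamma_i}^{\ast})$, $H^{\Gamma'}=\bigcap_i H_{\gamma_i}$ and $K_{H^{\Gamma'}}$ of \S\ref{final section} are all available for this choice of $\Gamma'$.

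Next I would reformulate the condition. For all but finitely many $\mfp\in P$ --- excluding the primes dividing $\mfn$ and the finitely many exceptional primes of the $r$ inner twists --- we have $C(\mfp,f)\in K=E_f^{\Gamma'}$ if and only if $\gamma_i(C(\mfp,f))=C(\mfp,f)$ for every $i$. By Definition~\ref{defn inner} and Proposition~\ref{TRS}, $\gamma_i(C(\mfp,f))=C(\mfp,f_{\Psi_{\gamma_i}})=\Psi_{\gamma_i}^{\ast}(\mfp)\,C(\mfp,f)$, so this amounts to $(\Psi_{\gamma_i}^{\ast}(\mfp)-1)\,C(\mfp,f)=0$ for all $i$; that is, $C(\mfp,f)\in K$ holds exactly when either $C(\mfp,f)=0$, or $\Psi_{\gamma_i}^{\ast}(\mfp)=1$ for all $i$. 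Consequently, up to a finite set, the symmetric difference of $\{\mfp\in P: C(\mfp,f)\in K\}$ and $\{\mfp\in P:\Psi_\gamma^{\ast}(\mfp)=1\ \forall\gamma\in\Gamma'\}$ is contained in $\{\mfp\in P: C(\mfp,f)=0\}$.

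To finish, since $f$ is non-CM the set $\{\mfp\in P: C(\mfp,f)=0\}$ has Dirichlet density $0$ (the Hilbert modular counterpart of Serre's vanishing theorem, cf.~\cite{S81}), so the two sets above have the same Dirichlet density. By Lemma~\ref{density of splits completely prime}, $\{\mfp\in P:\Psi_\gamma^{\ast}(\mfp)=1\ \forall\gamma\in\Gamma'\}$ is precisely the set of primes of $F$ that split completely in $K_{H^{\Gamma'}}$. Each $\Psi_{\gamma_i}^{\ast}$ is a character of $G_F$, so $H_{\gamma_i}\unlhd G_F$, and hence $H^{\Gamma'}\unlhd G_F$, making $K_{H^{\Gamma'}}/F$ a finite (abelian) Galois extension; the Chebotarev density theorem (cf.~\cite{S81}) then gives that the density of primes of $F$ splitting completely in it equals $1/[K_{H^{\Gamma'}}:F]$, which is the asserted value.

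The main obstacle is not in the Galois-theoretic or Chebotarev bookkeeping, which is routine once the setup of \S\ref{inner twists} and \S\ref{final section} is in place, but in the single external input that the vanishing locus $\{\mfp: C(\mfp,f)=0\}$ of a non-CM Hilbert modular form has Dirichlet density $0$; this has to be quoted rather than reproved here (and it also forces the $f$ in the statement to be non-CM, consistent with the hypothesis). Granting it, together with the consistency of the inner-twist formalism --- in particular that each $\gamma_i$ determines a well-defined $\Psi_{\gamma_i}$ with $\Psi_{\gamma_i}^{\ast}$ a genuine character of $G_F$, so that $K_{H^{\Gamma'}}/F$ is a finite extension --- the argument closes immediately.
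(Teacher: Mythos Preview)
Your proof is correct and follows essentially the same route as the paper's: Galois correspondence gives $\Gamma'$, the inner-twist relation $\gamma_i(C(\mfp,f))=\Psi_{\gamma_i}^\ast(\mfp)\,C(\mfp,f)$ reduces membership in $K$ to the simultaneous condition $\Psi_{\gamma_i}^\ast(\mfp)=1$ up to the vanishing locus, Lemma~\ref{density of splits completely prime} identifies this with complete splitting in $K_{H^{\Gamma'}}$, and Chebotarev finishes. The only difference is bibliographic: the paper cites \cite[Theorem 4.4(1)]{DK} for the density-zero vanishing locus in the Hilbert setting rather than \cite{S81}, which treats the classical case.
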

			
			\begin{proof}
				Since $E_f/F_f$ is Galois, there exists $\Gamma' \subseteq \Gamma$ such that $K=E_f^{\Gamma'}$.
				Hence,
				\begin{equation*}
					\begin{split}
						\{\mfp \in P :  C(\mfp,f) \in K \}  
						&=\{\mfp \in P :  \gamma(C(\mfp,f))= C(\mfp,f) \text{ for all } \gamma \in \Gamma^\prime  \} \\
						&=\{\mfp \in P : \Psi_\gamma^\ast(\mfp) C(\mfp,f)= C(\mfp,f)  \text{ for all } \gamma \in \Gamma^\prime  \}. 
					\end{split}
				\end{equation*}
				Since $\delta_D \left( \big \{\mfp \in P : C(\mfp,f)=0 \big \}\right)=0$ (cf. \cite[Theorem 4.4(1)]{DK}) and by
				Chebotarev density theorem, we have
				\begin{equation*}
					\begin{split}
						&\delta_D \left( \big \{\mfp \in P :   \Psi_\gamma^\ast(\mfp) C(\mfp,f)= C(\mfp,f)\   \textrm{for all}\  \gamma \in \Gamma' \big \} \right)
						\\
						&\qquad \qquad = \delta_D \left( \big \{ \mfp \in P :  \Psi_\gamma^\ast(\mfp)=1\   \textrm{for all}\  \gamma \in \Gamma' \big \} \right) \\
						&\qquad \qquad \underset{\mrm{Lemma}~\ref{density of splits completely prime}}{=} \delta_D \left( \big\{\mfp \in P : \mfp \text{ splits completely in } K_{H^{\Gamma'}} \big  \} \right) =\frac{1}{[K_{H^{\Gamma'}}: F]}. 
					\end{split}
				\end{equation*}
				This completes the proof.
			\end{proof}
		
			The following corollary is an application of Proposition~\ref{K contains F_f}
			and an analog of~\cite[Corollary 1.4]{KSW08} for classical modular forms.
			  
			\begin{cor}
				\label{Corollary to K contains F_f}
				Let $f$, $K$ be as in Proposition~\ref{K contains F_f} and  $K=E_f^{\Gamma'}$ for  $\Gamma' \leq  \Gamma$. Then 
				\begin{align*}
					&\delta_D \left( \big \{ \mfp \in P : \Q \big( C(\mfp,f) \big) =K \big \} \right) \\
					&\qquad = \delta_D \left( \big \{ \mfp \in P : \Psi_\gamma^\ast(\mfp)=1, \; \forall \gamma \in \Gamma' \ \mathrm{and}\      \Psi_\omega^\ast(\mfp)\neq 1, \; \forall \omega \in \Gamma - \Gamma' \big \} \right).
				\end{align*}
			\end{cor}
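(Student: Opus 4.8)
The plan is to pin down the subfield $\Q(C(\mfp,f)) \subseteq E_f$ in terms of the values $\Psi_\gamma^\ast(\mfp)$, $\gamma \in \Gamma$, up to a set of primes of density zero, and then read off the asserted identity of densities.

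First I would record the basic relation between inner twists and Fourier coefficients: by Definition~\ref{defn inner} together with Proposition~\ref{TRS}, for every $\gamma \in \Gamma$ one has $\gamma(C(\mfp,f)) = \Psi_\gamma^\ast(\mfp)\, C(\mfp,f)$ for all but finitely many $\mfp \in P$. Since $\Gamma$ is finite and $\delta_D(\{\mfp : C(\mfp,f)=0\}) = 0$ by \cite[Theorem 4.4(1)]{DK}, there is a set $\mcS_1 \subseteq P$ of density one on which $C(\mfp,f) \neq 0$ and the stabiliser $\Gamma_\mfp := \{\gamma \in \Gamma : \gamma(C(\mfp,f)) = C(\mfp,f)\}$ equals $\{\gamma \in \Gamma : \Psi_\gamma^\ast(\mfp) = 1\}$. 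Next I would intersect $\mcS_1$ with the density-one set from Corollary~\ref{Corollary2 to MT3} on which $F_f \subseteq \Q(C(\mfp,f))$, obtaining a density-one set $\mcS$.

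The key step is then purely Galois-theoretic: for $\mfp \in \mcS$ the field $\Q(C(\mfp,f))$ contains $F_f = E_f^{\Gamma}$, hence is an intermediate field of the Galois extension $E_f/F_f$, and its fixing subgroup in $\Gamma$ is exactly $\Gamma_\mfp$; by the Galois correspondence, $\Q(C(\mfp,f)) = E_f^{\Gamma_\mfp}$. Consequently, for $\mfp \in \mcS$, and using $K = E_f^{\Gamma'}$,
$$\Q(C(\mfp,f)) = K \iff \Gamma_\mfp = \Gamma' \iff \big(\Psi_\gamma^\ast(\mfp)=1\ \forall\, \gamma \in \Gamma' \ \text{and}\ \Psi_\omega^\ast(\mfp)\neq 1\ \forall\, \omega \in \Gamma - \Gamma'\big).$$
Since $\mcS$ has density one, the two sets appearing in the statement agree outside $P \setminus \mcS$, which has density zero, so their Dirichlet densities coincide; the Chebotarev argument of Proposition~\ref{K contains F_f} and Lemma~\ref{density of splits completely prime} in fact makes the common value explicit.

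The hard part will be the passage $\Q(C(\mfp,f)) = E_f^{\Gamma_\mfp}$: this uses crucially that $F_f \subseteq \Q(C(\mfp,f))$, i.e. that we have restricted to $\mcS$ via Corollary~\ref{Corollary2 to MT3}; without this one only obtains $\Q(C(\mfp,f)) \cdot F_f = E_f^{\Gamma_\mfp}$, which does not determine $\Q(C(\mfp,f))$ itself. Everything else is bookkeeping with the finite group $\Gamma$ and the discarding of density-zero sets.
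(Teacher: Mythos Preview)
The paper does not write out a proof of this corollary; it merely flags it as an application of Proposition~\ref{K contains F_f}. Your argument is the natural way to fill this in, and it is correct: on the density-one set $\mcS_1$ you identify $\Gamma_\mfp$ with $\{\gamma:\Psi_\gamma^\ast(\mfp)=1\}$, and then the Galois correspondence for $E_f/F_f$ gives $\Q(C(\mfp,f))=E_f^{\Gamma_\mfp}$ once $F_f\subseteq\Q(C(\mfp,f))$, so that $\Q(C(\mfp,f))=K\iff\Gamma_\mfp=\Gamma'$ on $\mcS$. This is exactly the mechanism behind Proposition~\ref{K contains F_f}, sharpened from ``$C(\mfp,f)\in K$'' to ``$\Q(C(\mfp,f))=K$''.

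One point worth noting explicitly. Your invocation of Corollary~\ref{Corollary2 to MT3} imports the big-image hypothesis~\eqref{assumption in Theorem for F_f} of Theorem~\ref{MT3}, whereas the corollary as stated only asks that $f$ be as in Proposition~\ref{K contains F_f}, i.e.\ non-CM. This extra hypothesis is genuinely needed and not an artefact of your method: already in the extreme case $\Gamma'=\Gamma$, $K=F_f$, the right-hand set is all of $P$ while the left-hand set is $\{\mfp:\Q(C(\mfp,f))=F_f\}$, and showing the latter has density~$1$ is precisely Corollary~\ref{Corollary1 to MT3}/\ref{Corollary2 to MT3}. So your proof is complete under the hypotheses actually required; the paper's statement is slightly lax about carrying them over from \S\ref{MT1_Section}. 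Your closing remark that Proposition~\ref{K contains F_f} and Lemma~\ref{density of splits completely prime} make the common value explicit is a small overreach---they compute $\delta_D(\{\mfp:C(\mfp,f)\in K\})$, and one would still need an inclusion--exclusion over the subgroups $\Gamma''\supseteq\Gamma'$ to extract the density in the corollary itself---but this does not affect the proof of the stated identity.
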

			These results illustrate that the Dirichlet density of  $\mfp \in P$ such that $\Q(C(\mfp, f)) = K$, with $F_f \subseteq K \subseteq E_f$, is determined by the inner twists of $f$ associated with $K$.
			
\section*{Acknowledgments}  
The authors are thankful to the editor for the constant support and the anonymous referee for the valuable suggestions towards the improvement of this paper.

		\end{document}